\documentclass[11pt,reqno]{amsart}

\usepackage{hyperref}

\allowdisplaybreaks[4]

\usepackage{amsmath}
\usepackage{amssymb}
\usepackage{mathrsfs}
\usepackage{amsthm}
\usepackage{bm}
\usepackage{graphicx}
\usepackage{booktabs}
\usepackage{float}
\usepackage{geometry}
\geometry{left=3cm,right=3cm,top=3cm,bottom=3cm}
\usepackage{color}

\theoremstyle{plain}
\newtheorem{lemma}{Lemma}[section]
\newtheorem{theorem}[lemma]{Theorem}

\newtheorem{proposition}[lemma]{Proposition}
\newtheorem{corollary}[lemma]{Corollary}
\newtheorem{assumption}{Assumption}

\newcommand{\bbE}{\mathbb E}

\newcommand{\bbN}{\mathbb N}
\newcommand{\bbR}{\mathbb R}

\begin{document}
\title[Fully discrete schemes   for SHE with L\'evy noise]{On structure preservation for fully discrete finite difference schemes of stochastic heat equation with  L\'evy space-time white noise}
\author{Chuchu Chen, Tonghe Dang, Jialin Hong}
\address{LSEC, ICMSEC,  Academy of Mathematics and Systems Science, Chinese Academy of Sciences, Beijing 100190, China,
\and 
School of Mathematical Sciences, University of Chinese Academy of Sciences, Beijing 100049, China}
\email{chenchuchu@lsec.cc.ac.cn; dangth@lsec.cc.ac.cn; hjl@lsec.cc.ac.cn}
\thanks{This work is funded by the National key R\&D Program of China under Grant (No.
2020YFA0713701), National Natural Science Foundation of China (No. 12031020), and by Youth Innovation Promotion Association CAS, China.}
\begin{abstract}  
 This paper investigates the structure preservation and convergence analysis of 
a class of fully discrete finite difference schemes for the stochastic heat equation driven by  L\'evy space-time white noise. The novelty 
lies in the simultaneous preservation of 
 intrinsic structures for  the exact solution, in particular  the weak intermittency  of moments and the regularity of c\`adl\`ag path in negative fractional Sobolev spaces. The key in the proof is the detailed  analysis of  technical estimates  for  discrete Green functions of the numerical solution. This analysis is also crucial   in  establishing  the mean-square convergence of the schemes with  orders of  almost $\frac12$ in space and almost $\frac14$ in time.  \end{abstract}
\keywords {Stochastic heat equation  $\cdot$ Finite difference scheme $\cdot$ L\'evy space-time white noise $\cdot$   Structure preservation $\cdot$ Convergence}

\maketitle

\section{Introduction}
In this paper, we study the structure preservation and  convergence analysis for a class of  fully discrete schemes  of   the stochastic heat equation (SHE) with periodic boundary condition  
\begin{align}
\begin{cases}
\partial_t u(t,x)=\partial^2_x u(t,x)+\sigma(u(t,x))\dot{\Lambda}(t,x),&\vspace{1 ex}\\
u(t,0)=u(t,1),\quad t\in [0,T], & \vspace{1 ex}\\
u(0,x)=u_0(x),\quad 0\leq x\leq 1, &
\end{cases}\label{she}
 \end{align}
 where $\sigma: \mathbb{R} \rightarrow \mathbb{R}$ is a globally Lipschitz function, and $u_0$ is a bounded, non-negative, non-random, and measurable function.
Here, $\dot{\Lambda}(t,x), t\ge0,x\in[0,1]$ denotes the  L\'evy space-time white   
 noise, which is the distributional derivative of a L\'evy sheet in $(1+1)$ parameters, 
on some complete filtered probability space $\big(\Omega,\mathcal{F},  \{\mathcal{F}_t \}_{t\ge 0},\mathbb{P}\big)$. Precisely, we assume that $\Lambda$ takes the form
\begin{align}\label{Lambda}
\Lambda(d t,d x)=bdtdx+\int_{\{|z|\leq 1\}}z\tilde {\mu}(dt,dx,dz)+\int_{\{|z|>1\}}z\mu(dt,dx,dz),
\end{align}
where $b\in\bbR,$ 
$\mu$ is a Poisson measure on $(0,\infty)\times [0,1]\times \mathbb R$ with intensity measure $\nu( dt,dx,dz)=dtdx\lambda(dz),$ and $\tilde \mu$ is the compensated version of $\mu.$ Here, 
$\lambda$ is a L\'evy measure satisfying 
\begin{align*}
\lambda(\{0\})=0,\quad \int_{\bbR}(1\wedge  |z|^2)\lambda(dz)<\infty.
\end{align*}
We assume that $\lambda$ is not identically zero. The L\'evy noise, as a typical example of the non-Gaussian noise, has important  applications in modeling stochastic processes characterized by jumps or sudden events. 
The SHE  with L\'evy  noise is widely applied to model more complex  phenomena such as financial market crashes, abrupt phase transitions, and  neural spiking patterns. 
We refer to  e.g. \cite{Applebook,Hau08,evolution} and references therein for details.

It is known that there are distinct differences for the solution of the SHE with the Gaussian noise and that with the L\'evy noise. Below we focus on the intermittency-type property of moments and the path property to illustrate the difference. 
 
  \textit{Gaussian space-time  white noise case.} The solution of the SHE with  Gaussian space-time  white noise  exhibits certain regular property    with finite moments of all orders,  namely,   \begin{align*}
ce^{ct}\leq \inf_{x\in[0,1]}\mathbb E[|u(t,x)|^p]\leq \sup_{x\in[0,1]}\mathbb E[|u(t,x)|^p]\leq Ce^{Ct},\quad p> 1  
\end{align*}
with some constants  $c,C>0$.  As a result,  the solution  is weakly intermittent of  all orders $p> 1;$ see e.g. \cite{dang2022} and references therein.   
Moreover, the solution has  continuous sample paths 
  in $H^r$ with $r<\frac12$,  namely, for any small constant $\epsilon>0,$ 
\begin{align}\label{path_ex}
\mathbb E[\|u(t)-u(s)\|_{H^r}^{2p}]\leq C|t-s|^{p(\frac12-r-\epsilon)},\quad p\ge 1,
\end{align}
see e.g. \cite[Theorem 2.31]{Kruse}.    Here, $H^r,r\in\mathbb R$ is the usual Sobolev space (see Section \ref{prelimi}).
Based on the Kolmogorov  continuity theorem,   \eqref{path_ex}  implies that the solution process  $u$ admits a continuous modification of sample paths in $H^r$  with $r<\frac12$. 

\textit{L\'evy space-time white noise case.} In contrast to  Gaussian noise, L\'evy noise has the unique   ability to capture discontinuities   and irregular changes in complex phenomena, often leading to    the non-existence of higher moments for the associated  stochastic processes. For the SHE \eqref{she} driven by   L\'evy space-time white noise, 
the solution has finite moments only up to the order $3-\epsilon$ with any small constant $\epsilon>0$. 
To be specific, under the moment boundedness  condition on the noise, it is shown that the unique  solution  satisfies
\begin{align}\label{mo_ex1}
ce^{ct}\leq \inf_{x\in[0,1]}\mathbb E[|u(t,x)|^p]\leq \sup_{x\in[0,1]}\mathbb E[|u(t,x)|^p]\leq Ce^{Ct},\quad p\in(1,3) 
\end{align}
with some constants $c,C>0.$ This  indicates  that 
\begin{align}\label{weak_ex}
0<\underline{\gamma}(p)\leq \bar{\gamma}(p)<\infty,\quad p\in(1,3),
\end{align}
which means that the solution is weakly intermittent of order $p\in(1,3);$ see Proposition \ref{ex_intermittent} for details. Here, $\bar{\gamma}(p)$ and $\underline{\gamma}(p)$ are $p$th upper and lower moment Lyapunov exponents, respectively, defined as  
\begin{align*}&\bar{\gamma}(p):=\limsup\limits_{t\to\infty}\frac{\sup_{x\in[0,1]}\log \mathbb E[|u(t,x)|^p]}{t},
\quad \underline{\gamma}(p):=\liminf\limits_{t\to\infty}\frac{\inf_{x\in[0,1]}\log \mathbb E[|u(t,x)|^p]}{t}.
\end{align*} Moreover, the discontinuity in particular the jump of the L\'evy noise will create a Dirac mass for the solution,  which results that the  solution cannot be expected to have a c\`adl\`ag  modification (right-continuous with finite left-limit) in any positive Sobolev spaces. 
A key feature to characterize the c\`adl\`ag path is as follows: for any $t\in(0,T]$ and $h\in(0,t\wedge 1),$ 
\begin{align}\label{path_ex1}
\mathbb E[|\textrm{osc}_r(u(t+h),u(t))\textrm{osc}_r(u(t),u(t-h))|^2]\leq Ch^{1+\delta}
\end{align} 
holds for some    small constant $\delta>0,$ where we define    the oscillation $\textrm{osc}_r(u(t+h),u(t)):=\|u(t+h)-u(t)\|_{H^r}.$ 
It is proved that \eqref{path_ex1} holds when  $r<-\frac12$ for the L\'evy space-time white noise case under the boundedness condition on the diffusion coefficient. 
We refer to  e.g. \cite{Applebaum,evolution} for the study of the well-posedness of the exact solution, and to e.g. \cite{Berger,SPDEAC19,Chong} for  the investigation of the intermittency-type property of moments and the path property  for  the exact solution of SHE with  L\'evy noise.

There have been fruitful works on the study of numerical methods 
for the SHE with Gaussian space-time white noise. By contrast, the  L\'evy noise  case remains  relatively underdeveloped and there are only few works on this aspect. 
For example, \cite{Hau08} investigates  the accuracy and approximation of stochastic partial differential equations with space-time Lévy noise using a finite element method in space and an implicit Euler scheme in time;  \cite{Hau12} presents various discretization methods to  accurately simulate  jumps induced by Lévy noise, alongside an analysis of an implicit time-discretization method. 
The aim of this paper is to study the preservation of both  the weak intermittency of moments \eqref{weak_ex} and the path property \eqref{path_ex1} of   
a class of fully discrete schemes for SHE \eqref{she} with  L\'evy space-time  white noise.  

To this end, we apply the finite difference method  in space and further the $\theta$-scheme with $\theta\in[0,1]$ in time to obtain a class of  fully discrete schemes. The numerical solution  is  c\`adl\`ag in space or in time when one of the variables is  fixed. These  schemes have  the  mild formulation with explicit  expressions of    discrete Green functions, 
which is fundamental to   analyzing  the preservation of  intrinsic structures of the exact solution.  
With  technical   estimates of  discrete Green functions, we show that 
the fully discrete schemes  inherit the weak intermittency of the exact solution for moments of  order $p\in(1,3)$, namely, 
\begin{align*}
0<\underline{\gamma}^{n,\tau}(p)\leq \bar{\gamma}^{n,\tau}(p)<\infty,\quad p\in(1,3),
\end{align*}
where $\bar{\gamma}^{n,\tau}(p),\underline{\gamma}^{n,\tau}(p)$ are discrete versions of $p$th upper and lower moment Lyapunov exponents, respectively. 
  In addition, by presenting  the relation between the norm in the negative Sobolev space and its discrete counterpart, we prove that the mild solution of the  fully discrete scheme   preserves the relation \eqref{path_ex1} uniformly with respect to the discretization parameters, namely, for any $t\in(0,T]$ and  $h\in(0,t\wedge 1),$  
  \begin{align*}
\sup_{n,\tau}\mathbb E[|\textrm{osc}_r(u^{n,\tau}(t+h),u^{n,\tau}(t))\textrm{osc}_r(u^{n,\tau}(t),u^{n,\tau}(t-h))|^2]\leq Ch^{1+\delta}
\end{align*} 
holds for some $\delta>0$ and  for all  $r<-\frac12$.
This result yields  that the numerical solution $\{u^{n,\tau}\}_{n,\tau}$ is weakly relatively compact in the Skorohod space $D([0,T];H^r)$ with $r<-\frac12.$

Furthermore, the convergence of the fully discrete schemes is also carefully analyzed. The prerequisite  is the  error estimates between  the discrete Green functions and the Green function of the exact solution. We show that the discrete Green functions converge to the exact one in the integral sense with certain orders.  Then we prove that the fully discrete scheme achieves  the  mean-square convergence orders of  almost $\frac12$ in space and almost $\frac14$ in time. 
In addition, we also present some discussions on the  the more general case that the noise is of infinite variance. In this case,   we  introduce a noise truncation skill and obtain a truncated numerical solution for the fully discrete scheme,  
which still possesses the weak intermittency and path property.    
We prove that the truncated numerical solution converges almost surely to the exact solution.

This paper is organized as follows. In Section \ref{prelimi}, we give some preliminaries for  the exact solution of \eqref{she}, including the well-posedness,  the weak intermittency, and the path property. 
In Section \ref{sec2}, we introduce the  fully discrete schemes of  \eqref{she}, and then prove the preservation of both the  weak intermittency and the path property of the exact solution. 
In Section \ref{sec_con},   we  show the   convergence order  of the fully  discrete scheme, and also give some discussions for  the infinite variance noise case. 
Section \ref{sec5} is devoted to the proof of  error estimates between  discrete Green functions and the Green function of the exact solution. 

Throughout this paper, we use $C$ to denote a positive constant which may not be the same in each occurrence. More specific constants which depend on certain parameters $a,b$ are numbered as $C(a,b)$.

\section{Preliminaries}\label{prelimi}
This section is devoted to presenting some preliminaries for the exact solution of \eqref{she}, including  the well-posedness,  the weak intermittency, and the path property. 

The mild solution of \eqref{she} has the form of 
\begin{align*}
u(t,x)=\int_0^1G(t,x,y)u_0(y)dy+\int_0^t\int_0^1G(t-s,x,y)\sigma(u(s,y))\Lambda(ds,dy). 
\end{align*}
Here, the  function $G$ is known as the Green function defined by \begin{align*}
G(t,x,y)=\frac{1}{\sqrt{4\pi t}}\sum_{m=-\infty}^{+\infty}e^{-\frac{(x-y-m)^2}{4t}}, 
\end{align*}
which  also has  the  spectral decomposition $G(t,x,y)=\sum_{m=-\infty}^{+\infty}e^{-4\pi^2m^2t}e^{2\pi\mathbf im(x-y)}$ for $ t>0,x,y\in[0,1];$ see e.g. \cite{dang2022}.
The  mild solution of \eqref{she} with the L\'evy space-time noise \eqref{Lambda} is well-posed, which is stated in the following proposition.  The  proof is   similar to \cite[Proposition 2.1]{SPDEAC19} and thus is omitted.  
\begin{assumption}\label{assum_noise} Assume that $m_{\lambda}(p):=\int_{\mathbb R}|z|^p\lambda(dz)<\infty$ for $p\in[1,3)$.
\end{assumption}
\begin{proposition}  \label{stop_solution} Under Assumption \ref{assum_noise}, 
there exists a unique mild solution of \eqref{she} satisfying   
$
\sup_{x\in[0,1]}\mathbb E[|u(t,x)|^p]\leq Ce^{Ct}, p\in[1,3)
$ with some constant $C>0.$
\end{proposition}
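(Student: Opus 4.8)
The plan is to obtain the solution as the limit of a Picard iteration and to read off the moment bound from a uniform-in-$k$ estimate along the iterates. Setting $u^{(0)}(t,x)=\int_0^1 G(t,x,y)u_0(y)\,dy$, which is bounded because $u_0$ is bounded and $\int_0^1 G(t,x,y)\,dy=1$, I would define
\begin{align*}
u^{(k+1)}(t,x)=u^{(0)}(t,x)+\int_0^t\int_0^1 G(t-s,x,y)\sigma\big(u^{(k)}(s,y)\big)\,\Lambda(ds,dy).
\end{align*}
Everything then reduces to a good $L^p(\Omega)$-bound for the stochastic convolution on the right. Using the L\'evy decomposition \eqref{Lambda}, I would split this integral into the drift part $b\int_0^t\int_0^1 G\,\sigma\,dy\,ds$, the compensated small-jump part $\int_{\{|z|\le1\}}G\,\sigma\,z\,\tilde\mu$, and the large-jump part $\int_{\{|z|>1\}}G\,\sigma\,z\,\mu$, estimating the $p$-th moment of each separately.

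For the drift term, H\"older's inequality together with $\int_0^1 G\,dy=1$ and the linear growth of $\sigma$ suffices. For the two jump terms I would invoke a Kunita/Burkholder--Davis--Gundy inequality for Poisson stochastic integrals: for $p\ge2$ the $p$-th moment of $\int H\,d\tilde\mu$ is controlled by $\mathbb E[(\int H^2\,d\nu)^{p/2}]+\mathbb E[\int|H|^p\,d\nu]$, while for $1\le p<2$ a subadditive bound is used. The decisive contribution is the term $\mathbb E[\int|H|^p\,d\nu]$, which after using $m_\lambda(p)=\int_{\mathbb R}|z|^p\lambda(dz)<\infty$ (Assumption \ref{assum_noise}) and the linear growth of $\sigma$ takes the form
\begin{align*}
C\,m_\lambda(p)\int_0^t\int_0^1 G(t-s,x,y)^p\big(1+\mathbb E[|u^{(k)}(s,y)|^p]\big)\,dy\,ds.
\end{align*}
Here the binding constraint appears: since $\int_0^1 G(s,x,y)^p\,dy\le C\,s^{-(p-1)/2}$, the time kernel $(t-s)^{-(p-1)/2}$ is integrable precisely when $(p-1)/2<1$, i.e.\ $p<3$. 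The companion $L^2$-type term requires only $\int_0^1 G^2\,dy\le C\,s^{-1/2}$, which is integrable for every $p$ and hence harmless for the threshold; the large-jump part, being of finite activity because $\int_{\{|z|>1\}}\lambda(dz)<\infty$, is handled by the same inequalities without compensation.

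Writing $M_k(t)=\sup_{x\in[0,1]}\mathbb E[|u^{(k)}(t,x)|^p]$ and collecting the bounds, I would arrive at a renewal inequality of the form $M_{k+1}(t)\le C_0+C\int_0^t(t-s)^{-(p-1)/2}(1+M_k(s))\,ds$, to which a generalized (fractional) Gronwall lemma applies and yields $\sup_k M_k(t)\le Ce^{Ct}$. A parallel estimate for the differences $u^{(k+1)}-u^{(k)}$, again closed by the fractional Gronwall lemma, shows that the iterates form a Cauchy sequence in $L^p(\Omega)$ uniformly in $(t,x)$; the limit is the desired mild solution and inherits the bound $Ce^{Ct}$, while uniqueness follows from the same argument applied to the difference of two solutions. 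The main obstacle I anticipate is the careful bookkeeping in the moment inequality for the jump integrals: one must treat the ranges $1\le p<2$ and $p\ge2$ with different inequalities and, via Minkowski's integral inequality, keep the recursion linear in $M_k(s)$ so that the singular-kernel Gronwall argument closes, all while extracting precisely the $p<3$ threshold from the $L^p$-integrability of $G$.
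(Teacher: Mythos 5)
Your proposal is correct and is essentially the approach the paper relies on: the paper omits this proof (deferring to \cite[Proposition 2.1]{SPDEAC19}), but the same argument is spelled out in Step 1 of Theorem \ref{main1} for the discrete scheme --- mild formulation plus Picard iteration, Kunita/Burkholder--Davis--Gundy maximal inequalities for Poisson integrals with the cases $p\in[2,3)$ and $p\in[1,2)$ treated separately, Minkowski's inequality to keep the recursion linear, linear growth of $\sigma$, and the kernel estimate $\int_0^1 G(s,x,y)^p\,dy\leq Cs^{-(p-1)/2}$, which is precisely where the threshold $p<3$ enters. The only difference is bookkeeping: you close the recursion with a singular-kernel (fractional) Gr\"onwall lemma along the iterates, whereas the paper multiplies by $e^{-\beta pt}$ and contracts in the weighted norm $\mathcal N_{\beta,p}$ for $\beta$ large; the two devices are interchangeable here.
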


 To present the weak intermittency of the exact solution, we introduce the following assumption on the    coefficient and the noise. Denote $L_{\sigma}:=\sup_{x\neq y,\,x,y\in\bbR}\big|\frac{\sigma(x)-\sigma(y)}{x-y}\big|$, $J_0:=\inf_{x\in\bbR\setminus \{0\}}\big|\frac{\sigma(x)}{x}\big|$.  

\begin{assumption}\label{assump1} 
 Assume  $L_{\sigma}>0,$ $J_0>0,$ $u_0\equiv c>0,$ and  $b=-\int_{\{|z|>1\}}z\nu(dz)$.   
\end{assumption}

\begin{proposition}\label{ex_intermittent}
Let Assumptions \ref{assum_noise} and  \ref{assump1} hold. Then the mild solution of \eqref{she} is weakly intermittent of order $p\in(1,3)$, i.e.,  $0<\underline{\gamma}(p)\leq \bar{\gamma}(p)<\infty,\; p\in(1,3).$ \end{proposition}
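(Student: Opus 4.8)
The plan is to prove the two inequalities separately. The upper bound $\bar{\gamma}(p)<\infty$ is immediate: Proposition \ref{stop_solution} already gives $\sup_{x\in[0,1]}\bbE[|u(t,x)|^p]\le Ce^{Ct}$ for $p\in[1,3)$, so the definition of $\bar{\gamma}(p)$ forces $\bar{\gamma}(p)\le C<\infty$. Hence the entire content is the lower bound $\underline{\gamma}(p)>0$, which I would obtain by a moment-growth (renewal) argument driven by two structural facts coming from Assumption \ref{assump1}. First, since $u_0\equiv c$ and $\int_0^1 G(t,x,y)\,dy=1$, the deterministic part of the mild solution is the constant $c$; and since $b=-\int_{\{|z|>1\}}z\nu(dz)$, the stochastic integral reduces to a purely compensated, mean-zero Poisson integral, so that $u(t,x)=c+N_t$ with $N_t:=\int_0^t\int_0^1\int_{\bbR}G(t-s,x,y)\sigma(u(s,y))z\,\tilde{\mu}(ds,dy,dz)$ and $\bbE[N_t]=0$. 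Second, two elementary bounds feed the renewal inequality: for $p\ge1$ Jensen in the $y$-variable gives $\int_0^1 G(r,x,y)^p\,dy\ge\big(\int_0^1 G(r,x,y)\,dy\big)^p=1$ for every $r>0$ and $x$, while $J_0>0$ gives $|\sigma(v)|^p\ge J_0^p|v|^p$.

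For the base case $p=2$, the It\^o isometry for the Poisson integral yields $\bbE[|u(t,x)|^2]=c^2+m_{\lambda}(2)\int_0^t\int_0^1 G(t-s,x,y)^2\,\bbE[\sigma(u(s,y))^2]\,dy\,ds$, where $m_\lambda(2)>0$ because $\lambda\neq0$ and $\lambda(\{0\})=0$. Bounding $\sigma$ below, using $\int_0^1 G^2\,dy\ge1$, and setting $f(t):=\inf_x\bbE[|u(t,x)|^2]$ gives $f(t)\ge c^2+m_{\lambda}(2)J_0^2\int_0^t f(s)\,ds$, so Gronwall produces $f(t)\ge c^2 e^{m_{\lambda}(2)J_0^2 t}$ and $\underline{\gamma}(2)\ge m_{\lambda}(2)J_0^2>0$. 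The range $p\in[2,3)$ then follows painlessly by Jensen, $\bbE[|u|^p]\ge(\bbE[|u|^2])^{p/2}$, which transfers the exponential lower bound and gives $\underline{\gamma}(p)\ge\tfrac{p}{2}m_{\lambda}(2)J_0^2>0$.

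The fractional range $p\in(1,2)$ is the crux. Here Jensen runs the wrong way, and one cannot simply interpolate the $p$-th moment from below using the second moment together with the upper bound on a higher moment $\rho\in(2,3)$, because the constant in $\sup_x\bbE[|u|^{\rho}]\le Ce^{Ct}$ is not controlled relative to the second-moment growth rate. Instead I would set up a direct renewal inequality for $g(t):=\inf_x\bbE[|u(t,x)|^p]$. The essential ingredient is a two-sided Burkholder--Davis--Gundy/Marcinkiewicz--Zygmund-type bound giving, for $p\in(1,2)$, a \emph{lower} estimate $\bbE[|N_t|^p]\ge c_p\,m_{\lambda}(p)\int_0^t\int_0^1 G(t-s,x,y)^p\,\bbE[|\sigma(u(s,y))|^p]\,dy\,ds$. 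Combined with $|\sigma|^p\ge J_0^p|\cdot|^p$, $\int_0^1 G^p\,dy\ge1$, and the reverse $L^p$-triangle inequality $\|u(t,x)\|_p\ge\|N_t\|_p-c$, this yields $g(t)\gtrsim\int_0^t g(s)\,ds$ once the moments exceed an $O(c)$ threshold, and Gronwall again forces exponential growth, hence $\underline{\gamma}(p)>0$.

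The main obstacle is exactly establishing that fractional-moment lower bound, and two difficulties must be handled. First, the stochastic convolution is not a martingale in $t$; I would circumvent this by freezing the terminal time and viewing $r\mapsto\int_0^r\int_0^1\int_{\bbR}G(t-s,x,y)\sigma(u)z\,\tilde{\mu}(ds,dy,dz)$ as a genuine martingale with terminal value $N_t$, so that BDG applies to it. Second, for $p<2$ the naive BDG bound only controls $\bbE\big[[N]_t^{p/2}\big]$, a concave power of the quadratic variation, and passing from there to the desired $\int|H|^p\,d\nu$ loses the lower bound because of possible cancellation in the compensated integral. I would resolve this by symmetrization, comparing $N_t$ with $N_t-N_t'$ for an independent copy $N_t'$: for the symmetric variable the $p$-th moment is genuinely comparable to the sum of $p$-th powers of the jumps, and the reverse $L^p$-triangle inequality transfers the resulting lower bound back to $N_t$, at the cost of tracking the non-centered shift by $c$ and an initial-time threshold before the renewal inequality becomes self-improving. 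This symmetrized moment comparison for $p\in(1,2)$ is the technical heart of the argument.
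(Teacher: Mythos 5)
Your upper bound is the paper's, and your handling of $p\in[2,3)$ (It\^o isometry at $p=2$, inverse Gr\"onwall, then Jensen $\bbE[|u|^p]\ge(\bbE[|u|^2])^{p/2}$) is correct and in fact more elementary than the paper's appeal to convexity of $p\mapsto\underline{\gamma}(p)$. You also rightly identify $p\in(1,2)$ as the crux and rightly dismiss interpolation. The gap is in your resolution of that crux: the inequality you need, $\bbE[|N_t|^p]\ge c_p\,m_{\lambda}(p)\int_0^t\int_0^1 G^p\,\bbE[|\sigma(u)|^p]\,dy\,ds$, cannot be obtained by symmetrization, because the claim your argument rests on --- that a symmetric compensated Poisson integral has $p$-th moment bounded below by a constant times $\bbE\int|H z|^p\,d\nu$ --- is false for $p\in(1,2)$. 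Take $H\equiv 1$ on $[0,t]\times[0,1]$ and $\lambda=\tfrac12(\delta_1+\delta_{-1})$: then $N_t$ is already symmetric (a difference of two independent Poisson processes), and by Jensen $\bbE[|N_t|^p]\le(\bbE[N_t^2])^{p/2}=t^{p/2}$, while $\bbE\int|Hz|^p\,d\nu=t$; for $p<2$ the ratio tends to $0$ as $t\to\infty$, so no constant $c_p$ works. The obstruction is central-limit cancellation among many moderate jumps, which symmetrization does not remove (it only removes the mean). This regime is genuinely present in your application, since the periodic heat kernel satisfies $G(r,x,y)\to 1$ as $r\to\infty$; after your reduction $\int_0^1 G^p\,dy\ge 1$, the integrand over most of $[0,t]$ is exactly the flat one of the counterexample. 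Put differently, Doob plus BDG do give $\bbE[|N_t|^p]\ge c_p\,\bbE\big[[N]_t^{p/2}\big]$ for $p>1$, but $[N]_t^{p/2}$ is a concave power of the sum of squared jumps, and passing from it down to the sum of $p$-th powers of jumps is precisely the step that fails; your symmetrization does not recover it.

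The paper never attempts such a universal moment comparison. It quotes the specialized Lemmas 5.4 and 3.4 of \cite{Chong}, which are tailored to compensated Poisson integrals of this type, to obtain $\bbE[|u(t,x)|^p]\ge C|u_0|^p+C\int_0^t\int_0^1|G(t-s,x,y)|^p\,dy\,\inf_{y}\bbE[|u(s,y)|^p]\,ds$, and --- crucially --- it does \emph{not} discard the small-time singularity of the kernel: using $(\sum_m a_m)^p\ge\sum_m a_m^p$ it keeps the bound $\int_0^1 G(r,x,y)^p\,dy\ge p^{-1/2}(4\pi r)^{(1-p)/2}$, which blows up as $r\to0$ when $p>1$. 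This yields the renewal inequality $f(t)\ge C+C\int_0^t(t-s)^{(1-p)/2}f(s)\,ds$ for $f(t)=\inf_x\bbE[|u(t,x)|^p]$, and the exponential lower bound then follows from the renewal theorem with the explicit rate $\beta=(C\Gamma(\tfrac{3-p}{2}))^{2/(3-p)}$. The two ingredients your plan throws away --- the Poisson-specific lower-bound lemmas and the kernel singularity --- are exactly what carries the proof on $(1,2)$; to complete your argument you would either have to reprove those lemmas (a genuinely delicate jump-by-jump/It\^o-formula argument, not a symmetrized moment comparison) or cite them as the paper does.
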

\begin{proof}
From Proposition \ref{stop_solution}, we can obtain  the intermittent upper bound: $\bar{\gamma}(p)<\infty$ for $p\in[1,3).$ 
To prove the intermittent lower bound, we apply \cite[Lemmas 5.4 and 3.4]{Chong} to derive that for $p\in(1,2), 
 $\begin{align*}
&\mathbb E[|u(t,x)|^p]\ge C|u_0|^p+C\mathbb E\Big[\Big|\int_0^{t}\int_0^1G(t-s,x,y)\sigma(u(s,y))\Lambda (ds,dy)\Big|^p\Big]\\
&\ge C+C\int_0^{t}\int_0^1|G(t-s,x,y)|^pdy\inf_{y\in[0,1]}\mathbb E[|u(s,y)|^p]ds.
\end{align*}
Noting  that $(\sum_{n=1}^{\infty}a_n)^p\ge \sum_{n=1}^{\infty}a_n^p$ for $a_n\ge 0,p\ge 1,$ 
 we have  
 \begin{align*}
\int_0^1|G(t,x,y)|^pdy&\ge (\sqrt{4\pi t})^{-p} \sum_{m=-\infty}^{\infty}\int_{x-m-1}^{x-m}e^{-\frac{pz^2}{4t}}dz 
=p^{-\frac12}(4\pi)^{\frac{1-p}{2}}t^{\frac{1-p}{2}}.
\end{align*}
Hence, we arrive at
$\inf_{x\in[0,1]}\mathbb E[|u(t,x)|^p]
\ge C+C\int_0^{t}(t-s)^{\frac{1-p}{2}}\inf\limits_{y\in[0,1]}\mathbb E[|u(s,y)|^p]ds.$ 
Multiplying $e^{-\beta t}$ on both sides gives
\begin{align*}
e^{-\beta t}\inf_{x\in[0,1]}\mathbb E[|u(t,x)|^p]\ge Ce^{-\beta t}+C\int_0^t(t-s)^{\frac {1-p}{2}}e^{-\beta(t-s)}e^{-\beta s}\inf_{y\in[0,1]}\mathbb E[|u(s,y)|^p]ds.
\end{align*}
Noticing $C\int_0^{\infty}s^{\frac{1-p}{2}}e^{-\beta s}ds=C\Gamma(\frac{3-p}{2})\beta^{-\frac{3-p}{2}},$ we can take $\beta=(C\Gamma(\frac{3-p}{2}))^{\frac{2}{3-p}}$ so that $Cs^{\frac {1-p}{2}}e^{-\beta s}$
is a probability function on $s>0.$
Applying the renewal theorem (see e.g. \cite[Theorem V.7.1]{renewal}) and the property of the super-solution (see e.g. \cite[Theorem 7.11]{uppersolution}) gives that for  sufficiently large $t$,  
\begin{align*}
e^{-\beta t}\inf_{x\in[0,1]}\mathbb E[|u(t,x)|^p]\ge \frac{\int_0^{\infty}Ce^{-\beta s}|u_0|^p ds}{\int_0^{\infty}s e^{-\beta s}s^{\frac{1-p}{2}}ds}= C|u_0|^p\beta^{\frac{3-p}{2}}(\Gamma(\frac{5-p}{2}))^{-1}.
\end{align*}
This leads to 
$
\underline{\gamma}(p)\ge \beta>0
$ for $p\in(1,2). $ 
Combining the convexity of the map $p\mapsto \underline{\gamma}(p)$ 
yields that $\underline{\gamma}(p)>0$ for all $p\in(1,3),$ which 
finishes the proof. 
\end{proof}

To show the path property of the exact solution,  we first give a brief introduction to the usual Sobolev space $H^r,r\in\mathbb R.$ It is known that  $\{e_k(x):=e^{2\pi \mathbf{i}kx},x\in[0,1]\}_{k\in\mathbb N}$ forms an orthonormal basis of $H:=L^2(0,1)$ with the periodic boundary condition. Each  function $v\in H$ can be expanded in an exponential Fourier series: $
v(x)=\sum_{k=0}^{\infty}c_k(v)e^{2\pi \mathbf ikx}$ with $ c_k(v)=\int_0^1v(y)e^{-2\pi \mathbf iky}dy.
$ 
 The norm on the Sobolev space $H^r,r\in\bbR$ is  defined as 
$\|v\|_{H^r}:=\big(\sum_{k=0}^{\infty}(1+4\pi^2 k^2)^r|c_k(v)|^2\big)^{\frac12},$ see e.g. \cite[Section 2.2]{Sobolov2015} for more details. 

\begin{proposition}\label{path_pro}
 Let Assumption \ref{assum_noise} hold and $\sigma$ be bounded. Then for any $t\in(0,T] $ and $h\in(0,1\wedge t),$
\begin{align}\label{path_ex2}\mathbb E\Big[|\textrm{osc}_r(u(t+h),u(t))\textrm{osc}_r(u(t-h),u(t))|^2\Big]\leq Ch^{1+\delta}  
\end{align} 
holds for some $\delta>0$ and any   $r<-\frac12.$ 
\end{proposition}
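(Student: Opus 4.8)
The plan is to pass to the Fourier side and exploit the spectral form of the Green function. Using $G(t,x,y)=\sum_m e^{-4\pi^2m^2t}e_m(x)\overline{e_m(y)}$ and Fubini, the Fourier coefficients of the mild solution satisfy
\[
c_k(u(t))=e^{-4\pi^2k^2t}c_k(u_0)+\int_0^t\!\!\int_0^1 e^{-4\pi^2k^2(t-s)}\overline{e_k(y)}\,\sigma(u(s,y))\,\Lambda(\rmd s,\rmd y),
\]
so that each oscillation becomes a weighted sum $\|u(t+h)-u(t)\|_{H^r}^2=\sum_k(1+4\pi^2k^2)^r|c_k(u(t+h))-c_k(u(t))|^2$ of one–dimensional stochastic integrals. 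The restriction $r<-\tfrac12$ enters precisely here: $\sum_k(1+4\pi^2k^2)^r<\infty$ if and only if $r<-\tfrac12$, which is exactly the condition for a spatial Dirac mass—the profile created by a jump—to belong to $H^r$, and hence for each increment to be square–integrable in $H^r$.

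I would then split each increment into a \emph{regular} part and a \emph{fresh} part. For the forward increment, the regular part collects the contributions of the noise accumulated on $[0,t]$, carrying the coefficient $e^{-4\pi^2k^2(t-s)}(e^{-4\pi^2k^2h}-1)$ (equivalently it is $(e^{h\partial_x^2}-\mathrm{Id})u(t)$), while the fresh part is the stochastic integral over $(t,t+h)$; the backward increment splits analogously, with fresh part on $(t-h,t)$. The extra factor $(1-e^{-4\pi^2k^2h})$ in the regular part, together with the a priori bound $\bbE[|c_k(u(t))|^2]\le C(1+k^2)^{-1}$ for the stochastic contribution (which follows from Proposition \ref{stop_solution} and the It\^o isometry, legitimate because $m_\lambda(2)<\infty$), gives the smoothing estimate $\bbE[\|\text{regular}\|_{H^r}^2]\le C\sum_k(1+k^2)^{r-1}(1-e^{-ck^2h})^2\le Ch^{1+\delta}$ with $\delta=-r-\tfrac12>0$. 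For the fresh part the It\^o isometry and the boundedness of $\sigma$ yield the complementary bound $\bbE[\|\text{fresh}\|_{H^r}^2]\le Ch$.

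The assembly rests on independence over disjoint time intervals. Since the forward fresh part lives on $(t,t+h)$ it is independent of $\mathcal{F}_t$, and the two fresh parts are supported on the disjoint intervals $(t,t+h)$ and $(t-h,t)$. Conditioning on $\mathcal{F}_t$ and using the deterministic bound $\bbE[\|\text{fresh}_A\|_{H^r}^2\mid\mathcal{F}_t]\le Ch$ (from $|\sigma|\le\|\sigma\|_\infty$), the target expectation reduces to a sum over the pairings of the regular and fresh parts of the two increments. Each pairing whose two factors sit on disjoint intervals factorizes into a product of the two estimates above and is therefore of order $h^2$ or $h^{1+\delta}$; the deterministic initial–data contributions are negligible because $(e^{h\partial_x^2}-\mathrm{Id})$ applied to the smooth profile $e^{t\partial_x^2}u_0$ is $O(h)$ in $H^r$.

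The hard part is the \emph{overlap} contribution, where the forward and backward increments share the noise on $(t-h,t)$: a jump at time $\tau\in(t-h,t)$ is felt by the backward increment directly through $G(t-\tau,\cdot,y)$ and by the forward increment only through the smoothed difference $G(t+h-\tau,\cdot,y)-G(t-\tau,\cdot,y)$. Here the two factors cannot be decoupled by independence, nor separated by Cauchy–Schwarz or H\"older, since splitting a product of two squared $H^r$–norms would require a moment of the noise of order at least $4$ (a single large jump enters each oscillation linearly, so its contribution to the product of squares is quartic in the jump size), whereas Assumption \ref{assum_noise} supplies only moments of order strictly below $3$. This is exactly where the two–sided structure is essential: at any jump only one of the two oscillations is large at full strength, the other being damped by the factor $(1-e^{-4\pi^2k^2h})$ coming from $e^{h\partial_x^2}-\mathrm{Id}$. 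The refinement I would attempt is to separate the small jumps $\{|z|\le 1\}$, which possess moments of every order since $\int_{\{|z|\le 1\}}|z|^p\lambda(\rmd z)\le\int_{\{|z|\le 1\}}|z|^2\lambda(\rmd z)<\infty$ for all $p\ge 2$ and are therefore harmless, from the large jumps $\{|z|>1\}$, which over the short overlap interval $(t-h,t)$ occur with probability $O(h)$; for the latter one conditions on the jump configuration and keeps each large jump permanently attached to the damped forward difference rather than splitting it off. Making the large–jump overlap term genuinely $O(h^{1+\delta})$ using only the moments of order $<3$ from Assumption \ref{assum_noise}—so that the smoothing gain compensates exactly for the missing higher moments—is the delicate heart of the argument and the step I expect to be the main obstacle; the remaining terms then follow routinely from the two single–part estimates above.
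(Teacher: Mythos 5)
Your proposal stops exactly where a proof would have to begin, and the obstacle you flag at the end is not merely delicate --- under the stated hypotheses it is insurmountable, for two concrete reasons. (a) Your pairing bookkeeping is wrong: the two \emph{regular} parts $(e^{h\partial_x^2}-\mathrm{Id})u(t)$ and $(e^{h\partial_x^2}-\mathrm{Id})u(t-h)$ are both functionals of the noise on all of $[0,t-h]$, so the regular$\times$regular pairing does \emph{not} ``sit on disjoint intervals'' and cannot be factorized by conditioning; it carries the same quartic-in-the-jump-size contributions as the overlap term you single out. (b) More fundamentally, no rearrangement of small/large jumps can close the gap, because under Assumption \ref{assum_noise} alone the left-hand side of \eqref{path_ex2} can be infinite. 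Take $\sigma\equiv 1$, $u_0\equiv 0$ and $\lambda(dz)=\mathbf 1_{\{|z|>1\}}|z|^{-4}dz$, which satisfies $m_\lambda(p)<\infty$ exactly for $p<3$. For compensated Poisson integrals $X=\int f\,d\tilde\mu$, $Y=\int g\,d\tilde\mu$ one has $\bbE[X^2Y^2]=\int f^2g^2\,d\nu+\int f^2\,d\nu\int g^2\,d\nu+2\big(\int fg\,d\nu\big)^2$, with every term nonnegative; since here each kernel is linear in $z$, the first term equals $\int_{\bbR}z^4\lambda(dz)$ times a strictly positive space--time factor, hence is infinite. Equivalently: condition on a single large jump of size $Z$ at a time $S<t-h$; both increments then contain $Z$ times nonzero (damped) deterministic kernels, so on the event that $Z$ dominates the small-jump contributions the product of the two squared $H^r$-norms is bounded below by a positive multiple of $Z^4$, and $\bbE[Z^4\mathbf 1_{\{Z\ge M\}}]=\infty$ for every finite threshold $M$. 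The damping factors $(1-e^{-4\pi^2k^2h})$ improve the $k$-sums, but they \emph{multiply} the divergent $z$-integral and cannot cancel it; so the ``smoothing compensates for missing moments'' mechanism you hope for does not exist.

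The missing idea is truncation, not a finer jump-by-jump analysis. Note first that the paper does not prove Proposition \ref{path_pro} in-house: it cites \cite{SPDEAC19}, where the oscillation estimate is established after removing the large jumps, i.e.\ for noise with bounded jump sizes, for which moments of all orders (in particular fourth moments) are available and a scheme like yours closes by standard It\^o-isometry/Rosenthal-type arguments; the c\`adl\`ag property for the full noise is then recovered pathwise, since a.s.\ only finitely many jumps exceed the truncation level on $[0,T]\times[0,1]$. The same device appears in this paper itself: the proof of the discrete analogue (Theorem \ref{prop3.3}) opens by passing to the truncated noise \eqref{trun_noise}. If you reorganize your argument in that order --- truncate first, prove \eqref{path_ex2} for $\Lambda_N$ with your Fourier/regular-fresh decomposition (where all four pairings can be estimated, including regular$\times$regular, because fourth moments are now finite), and handle the finitely many large jumps outside of expectations altogether --- then your framework does lead to the intended conclusion.
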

The proof can be found in \cite{SPDEAC19}, where  the mild solution of \eqref{she} is also proved to admit a  c\`adl\`ag version in $H^{r}$ with $r<-\frac12.$  

\section{Fully  discrete scheme and structure preservation}\label{sec2}
 In this section, we introduce a class of fully  discrete schemes  of  \eqref{she}, whose spatial direction is   based on the finite difference method and temporal direction  is the $\theta$-scheme $(\theta\in[0,1])$.  We prove that the numerical solution can simultaneously  preserve  the weak intermittency and the path property of the exact solution.   

\subsection{Fully discrete scheme}
Introduce the uniform partition on the spatial domain $[0,1]$ with step size $\frac{1}{n}$ for a fixed integer $n\ge 3$.
Let $u^n(t,\frac{k}{n})$ be the approximation of $u(t,\frac{k}{n})$, $k=0,1,\ldots,n-1$.
The spatial semi-discretization  based on the finite difference method is given by:
\begin{equation}\label{fdm}
\begin{aligned}
\begin{cases}
du^n(t,\frac{k}{n})=n^2(u^n(t,\frac{k+1}{n})-2u^n(t,\frac{k}{n})+u^n(t,\frac{k-1}{n}))dt+n\sigma(u^n(t,\frac{k}{n}))\Lambda^{n,k}(dt),\\
u^n(t,0)=u^n(t,1),\quad u^n(t,-\frac{1}{n})=u^n(t,\frac{n-1}{n}),\quad t\ge 0,\\
u^n(0,\frac{k}{n})=u_0(\frac{k}{n}),\quad k=0,1,\ldots, n-1,
\end{cases}
\end{aligned}
\end{equation}
where $\Lambda^{n,k}(dt):=\Lambda(dt,[\frac kn,\frac{k+1}{n}))=\int_{\frac{k}{n}}^{\frac{k+1}{n}}\Lambda(dt,dx).$
Fix the uniform time step size $\tau\in(0,\frac12)$. By using the $\theta$-scheme to discretize \eqref{fdm}, we obtain the following fully   discretize scheme:
\begin{align}\label{fullscheme}
\begin{cases}
u^{n,\tau}(t_{i+1},x_j)=u^{n,\tau}(t_i,x_j)+(1-\theta)\tau \Delta_nu^{n,\tau}(t_i,\cdot)(x_j)+\theta \tau \Delta_nu^{n,\tau}(t_{i+1},\cdot)(x_j) \\
\qquad \qquad \qquad \quad+ n \sigma(u^{n,\tau}(t_i,x_j))\square_{n,\tau}\Lambda(t_i,x_j), \\
u^{n,\tau}(t_i,0)=u^{n,\tau}(t_i,1),\quad u^{n,\tau}(t_i,-\frac{1}{n})=u^{n,\tau}(t_i,\frac{n-1}{n}),\quad i=0,1,\ldots, \\
u^{n,\tau}(0,x_j)=u_0(x_j),\quad j=0,1,\ldots, n-1,
\end{cases}
\end{align} 
where $u^{n,\tau}$ is an approximation of $u^n$, $ t_i:=i\tau,x_j:=\frac{j}{n}$, and
\begin{equation*}
\begin{aligned}
&\Delta_nu^{n,\tau}(t_i,\cdot)(x_j):=n^2(u^{n,\tau}(t_i,x_{j+1})-2u^{n,\tau}(t_i,x_j)+u^{n,\tau}(t_i,x_{j-1})),\vspace{1 ex}\\
&\square_{n,\tau}\Lambda (t_i,x_j):=\Lambda([t_i,t_{i+1}),[x_j,x_{j+1})).
\end{aligned}
\end{equation*}
Similar to \cite[Eq. (17)]{dang2022},  the mild form of the fully discrete scheme is given by: 
\begin{align}\label{mild full}
u^{n,\tau}(t,x)=&\;\int_{0}^{1}G^{n,\tau}_1(t,x,y)u_0(\kappa_n(y))\,dy\nonumber\\
&+\int_{0}^{t}\int_{0}^{1}G^{n,\tau}_2(t-\kappa_{\tau}(s)-\tau,x,y)\sigma (u^{n,\tau}(\kappa_{\tau}(s),\kappa_n(y)))\Lambda(d s,d y),
\end{align}
almost surely for every $t=i\tau, i\ge 1,\,x\in [0,1]$, where $\kappa_{\tau}(s):=[\frac{s}{\tau}]\tau$ and   
$\kappa_n(y):=\frac{[ny]}{n}$ with  $[\cdot]$ being the greatest integer function. Here, 
the fully discrete Green functions are defined as 
\begin{align*}
&G^{n,\tau}_1(t,x,y):=\sum_{l=0}^{n-1}(R_{1,l}R_{2,l})^{[\frac{t}{\tau}]}e_l(\kappa_n(x))\bar{e}_l(\kappa_n(y))\mathbf 1_{\{t\ge 0\}},\\
&G^{n,\tau}_2(t,x,y):=\sum_{l=0}^{n-1}(R_{1,l}R_{2,l})^{[\frac{t}{\tau}]}R_{1,l}e_l(\kappa_n(x))\bar{e}_l(\kappa_n(y))\mathbf 1_{\{t\ge 0\}},
\end{align*}
where $R_{1,l}:=(1-\theta \tau\lambda^n_l)^{-1}, R_{2,l}:=1+(1-\theta)\tau \lambda^n_l$  with $\lambda^n_l:=-4n^2\sin^2(\frac{l\pi}{n})$,  
$e_l(x)=e^{2\pi \mathbf{i}lx}$, and $\bar{e}_l(\cdot)$ represents the complex conjugate of $e_l(\cdot).$ Let $f_l\in\mathbb C^n,l=0,\ldots,n-1$,   whose $k$th component is $[f_l]_k:=\frac{1}{\sqrt{n}}e^{2\pi \mathbf{i}l\frac{k}{n}},k=0,1,\ldots, n-1$. Then $\{f_l,l=0,1,\ldots, n-1\}$ forms an orthonormal basis in $\mathbb{C}^n$. 
When $t\in[t_i,t_{i+1}),$ we define $u^{n,\tau}(t,x)=u^{n,\tau}(t_i,x)$. Then $u^{n,\tau}(\cdot,x)$ is right-continuous and has left-limit for $x\in[0,1]$.

We make the following assumption on the spatial step size $\frac{1}{n}$ and the temporal step size $\tau$ when $\theta$ takes different values, to ensure the well-posedness of the fully discrete Green functions and the numerical solutions. We also refer to \cite{dang2022} for more details about this assumption. 
\begin{assumption}\label{Assumption3}
(\romannumeral1) For $0\leq \theta<\frac{1}{2},$ suppose 
$n^2\tau\leq r<\frac{1}{2-4\theta}$ with some constant $r>0$.

(\romannumeral2) For $\theta=\frac{1}{2},$ suppose 
$n^2\tau\leq \frac{1}{\epsilon}-\frac{1}{2}$ with any fixed  $\epsilon\in(0,\frac{1}{2}).$

(\romannumeral3) For $\frac{1}{2}<\theta\leq 1$, there is no coupled requirement for $n,\tau.$
\end{assumption}

\subsection{Preservation of the weak intermittency}
In this subsection, we show that the numerical solution of the fully discrete scheme inherits  the weak intermittency of the exact solution.  
Define the discrete versions of the $p$th upper and lower moment Lyapunov exponents as $\bar{\gamma}^{n,\tau}(p):=\limsup\limits_{i\to\infty}\frac{\sup_{x\in[0,1]}\log \mathbb E[|u^{n,\tau}(t_i,\kappa_n(x))|^p]}{t_i}$ and $\underline{\gamma}^{n,\tau}(p):=\liminf\limits_{i\to\infty}\frac{\inf_{x\in[0,1]}\log\mathbb E[|u^{n,\tau}(t_i,\kappa_n(x))|^p]}{t_i},$ respectively.    
 To prove the weak intermittent of moments for numerical solution, we need  the integrability   property of the discrete Green function and an  inverse Gr\"onwall inequality, which are stated in the following two lemmas, respectively. 
\begin{lemma}\label{propG}
 Under Assumption \ref{Assumption3}, we have that for $x\in[0,1]$,   $p\in[1,3),$ and $\beta>0,$
$$
\int_0^{\infty}\int_0^1|G^{n,\tau}_2(s,x,y)|^pe^{-\beta ps}dyds\leq \frac{C}{\beta p}+C\Gamma(\tilde{p})(\beta p)^{-\tilde p},
$$
where $\tilde p=\frac{3-p}{2}$ for $p\in[2,3)$ and $\tilde p=\frac{2-p}{2}$ for $p\in[1,2).$
\end{lemma}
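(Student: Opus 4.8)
The plan is to bound the spatial integral $\int_0^1|G^{n,\tau}_2(s,x,y)|^p\,dy$ pointwise in $s$ by an explicit function of $\kappa_{\tau}(s)+\tau$, and then integrate that bound against $e^{-\beta ps}$. First I would exploit the fact that, for fixed $s$ and $x$, the map $y\mapsto G^{n,\tau}_2(s,x,y)$ is constant on each interval $[\frac kn,\frac{k+1}{n})$, so the spatial integral reduces to a normalized sum over $k$. Using the representation $G^{n,\tau}_2(s,x,y)=\sum_{l=0}^{n-1}(R_{1,l}R_{2,l})^{[s/\tau]}R_{1,l}e_l(\kappa_n(x))\bar e_l(\kappa_n(y))$ together with the orthonormality of $\{f_l\}$ in $\mathbb C^n$, Parseval's identity gives the exact value $\int_0^1|G^{n,\tau}_2(s,x,y)|^2\,dy=\sum_{l=0}^{n-1}(R_{1,l}R_{2,l})^{2[s/\tau]}R_{1,l}^2$, while the triangle inequality gives the pointwise bound $\sup_{x,y}|G^{n,\tau}_2(s,x,y)|\le\sum_{l=0}^{n-1}|R_{1,l}R_{2,l}|^{[s/\tau]}R_{1,l}$. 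Both are independent of $x$.

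The core of the argument is the estimation of these two mode sums under Assumption \ref{Assumption3}. Writing $\mu_l:=-\lambda^n_l=4n^2\sin^2(\frac{l\pi}{n})\ge0$, I would record the elementary facts $0<R_{1,l}\le1$, the decay estimate $|R_{1,l}R_{2,l}|\le e^{-c\tau\mu_l}$ for some $c=c(r,\theta)>0$, and the spectral lower bound $\mu_l\ge c'\min\{l,n-l\}^2$ coming from $\sin\vartheta\ge\frac2\pi\vartheta$ on $[0,\frac\pi2]$. Comparing the resulting Gaussian-type sum to an integral, $\sum_{l=0}^{n-1}e^{-c\kappa_{\tau}(s)\mu_l}\le C+C\kappa_{\tau}(s)^{-1/2}$ when $[s/\tau]\ge1$, and treating the regime $s<\tau$ (where only the factor $\sum_l R_{1,l}$ survives, bounded by $C\tau^{-1/2}$ thanks to the coupling $n^2\tau\le r$) separately, I would obtain the uniform estimates $\|G^{n,\tau}_2(s,x,\cdot)\|_{L^\infty}\le C+C(\kappa_{\tau}(s)+\tau)^{-1/2}$ and $\int_0^1|G^{n,\tau}_2(s,x,y)|^2\,dy\le C+C(\kappa_{\tau}(s)+\tau)^{-1/2}$, where the additive constant accounts for the non-decaying mode $l=0$.

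From these two estimates the pointwise-in-$s$ $L^p$ bound follows by elementary interpolation, and the threshold $p=2$ enters naturally here. For $p\in[1,2)$ I would simply use $\int_0^1|G^{n,\tau}_2|^p\,dy\le\|G^{n,\tau}_2(s,x,\cdot)\|_{L^\infty}^p\le C+C(\kappa_{\tau}(s)+\tau)^{-p/2}$, noting that Hölder in the other direction is unavailable since $p-2<0$; for $p\in[2,3)$ I would use $\int_0^1|G^{n,\tau}_2|^p\,dy\le\|G^{n,\tau}_2(s,x,\cdot)\|_{L^\infty}^{p-2}\int_0^1|G^{n,\tau}_2|^2\,dy\le C+C(\kappa_{\tau}(s)+\tau)^{(1-p)/2}$. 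In both cases this is exactly $C+C(\kappa_{\tau}(s)+\tau)^{\tilde p-1}$ with the stated value of $\tilde p$, and the constraint $p<3$ (resp. $p<2$) is precisely what makes the exponent $\tilde p-1$ exceed $-1$, i.e. keeps the bound integrable near $s=0$.

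Finally I would integrate $[C+C(\kappa_{\tau}(s)+\tau)^{\tilde p-1}]e^{-\beta ps}$ over $(0,\infty)$. The constant part yields $\frac{C}{\beta p}$. For the singular part I would split at $s=\tau$: on $\{s\ge\tau\}$ the inequality $\kappa_{\tau}(s)\ge s/2$ lets me dominate the integrand by $Cs^{\tilde p-1}e^{-\beta ps}$, whose integral is $C\Gamma(\tilde p)(\beta p)^{-\tilde p}$; on $\{s<\tau\}$ the factor equals $\tau^{\tilde p-1}$, and a short case analysis according to whether $\beta p\tau\le1$ or $\beta p\tau>1$ bounds its contribution by $C(\beta p)^{-\tilde p}$ as well. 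Combining the pieces gives the claim. I expect the main obstacle to be the second step: establishing the decay estimate $|R_{1,l}R_{2,l}|\le e^{-c\tau\mu_l}$ uniformly in $l$ and across the three cases of Assumption \ref{Assumption3} — in particular the explicit regime $0\le\theta<\frac12$, where $|R_{1,l}R_{2,l}|$ approaches $1$ as $\tau\mu_l$ nears the stability threshold $\frac{2}{1-2\theta}$, so that the strict inequality $n^2\tau\le r<\frac{1}{2-4\theta}$ must be used to keep $c$ bounded below — together with the careful bookkeeping of the $s<\tau$ regime needed to ensure all constants remain independent of $n$ and $\tau$.
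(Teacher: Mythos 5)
Your overall architecture --- orthonormality of $\{f_l\}$ in $\mathbb{C}^n$ giving the exact $L^2_y$ identity, the sup bound by the mode sum, the interpolation $\int_0^1|G^{n,\tau}_2|^p\,dy\le\|G^{n,\tau}_2\|_{L^\infty}^{p-2}\int_0^1|G^{n,\tau}_2|^2\,dy$ for $p\in[2,3)$ versus $\|G^{n,\tau}_2\|_{L^\infty}^p$ for $p\in[1,2)$, and the final $\Gamma$-integration --- is exactly the paper's route: the paper invokes the kernel bounds $\int_0^1|G^{n,\tau}_2(s,x,y)|^2\,dy\le C(1+s^{-1/2})$ and $1+\sum_{j=1}^{n-1}|R_{1,j}R_{2,j}|^{[\frac{s}{\tau}]}|R_{1,j}|\le C(1+s^{-1/2})$ (from, or analogously to, \cite[Lemma 4.1 (iii)]{dang2022}) and then integrates $(1+s^{-1/2})^{p-1}e^{-\beta ps}$, resp.\ $(1+s^{-1/2})^{p}e^{-\beta ps}$. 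However, there is a genuine gap in the one ingredient you chose to prove from scratch: the uniform decay estimate $|R_{1,l}R_{2,l}|\le e^{-c\tau\mu_l}$, with $\mu_l:=-\lambda^n_l$, is \emph{false} in case (iii) of Assumption \ref{Assumption3}. There $n$ and $\tau$ are uncoupled, so $z:=\tau\mu_l$ ranges over an unbounded set, and $R_{1,l}R_{2,l}=\frac{1-(1-\theta)z}{1+\theta z}\to-\frac{1-\theta}{\theta}$ as $z\to\infty$: for $\theta\in(\frac12,1)$ the modulus tends to the positive constant $\frac{1-\theta}{\theta}$, and for $\theta=1$ it equals $\frac{1}{1+z}$, decaying only polynomially --- in either case it eventually exceeds $e^{-cz}$ for every fixed $c>0$. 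You also misdiagnosed where the danger lies: for $0\le\theta<\frac12$ the coupling $n^2\tau\le r<\frac{1}{2-4\theta}$ confines $z$ to a compact interval on which $|R_{1,l}R_{2,l}|<1$ away from $z=0$ (with slope $-1$ at $z=0$), so your bound does hold there, and likewise for $\theta=\frac12$; the breakdown occurs precisely in the uncoupled regime $\theta\in(\frac12,1]$.

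The repair is the mode splitting the paper uses elsewhere, \eqref{set_split}--\eqref{set_prop}. On $A_1$ (where $R_{1,j}R_{2,j}\ge\frac12$, hence $\tau\mu_j$ is small) your exponential comparison is legitimate and yields the Gaussian-type sum $C+C\kappa_\tau(s)^{-1/2}$. On $A_2$ one only has $|R_{1,j}R_{2,j}|\le1-\epsilon$, and the summability in $j$ must come from the resolvent factor you discarded after noting $0<R_{1,l}\le1$: for $\theta>\frac12$, $R_{1,j}=(1+\theta\tau\mu_j)^{-1}\le C\big(1+\tau\min\{j,n-j\}^2\big)^{-1}$, so $\sum_{j\in A_2}R_{1,j}\le C\tau^{-1/2}$ and hence $\sum_{j\in A_2}|R_{1,j}R_{2,j}|^{[\frac{s}{\tau}]}R_{1,j}\le C(1-\epsilon)^{[\frac{s}{\tau}]}\tau^{-1/2}\le C(\kappa_\tau(s)+\tau)^{-1/2}$, in the spirit of \eqref{mathcal L1}; when the coupling does hold, the crude count $\#A_2\le n\le C\tau^{-1/2}$ serves the same purpose and covers $\theta=0$, where $R_{1,j}\equiv1$. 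With this substitution your interpolation step (including the correct role of $p<3$, resp.\ $p<2$, in keeping $\tilde p>0$) and your integration over $\{s<\tau\}$ and $\{s\ge\tau\}$ are sound and reproduce the paper's conclusion.
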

\begin{proof}
 When $p\in[2,3)$, applying \cite[Lemma 4.1 (\romannumeral3)]{dang2022} yields
\begin{align*}
&\quad \int_0^{\infty}\int_0^1|G^{n,\tau}_2(s,x,y)|^2dy\sup_{y\in[0,1]}|G^{n,\tau}_2(s,x,y)|^{p-2}e^{-\beta ps}ds\\
&\leq C\int_0^{\infty}(1+\frac{1}{\sqrt{s}})(1+\sum_{j=1}^{n-1}|R_{1,j}R_{2,j}|^{[\frac {s}{\tau}]}|R_{1,j}|)^{p-2}e^{-\beta ps}ds\\
&\leq \leq C\int_0^{\infty}\Big(1+\frac{1}{\sqrt{s}}\Big)^{p-1}e^{-\beta ps}ds\leq \frac{C}{\beta p}+C\Gamma(\frac{3-p}{2})(\beta p)^{\frac{p-3}{2}}, 
\end{align*}
where we use $
1+\sum_{j=1}^{n-1}|R_{1,j}R_{2,j}|^{[\frac{s}{\tau}]}|R_{1,j}|\leq C(1+\frac{1}{\sqrt s}),s>0 
$ whose proof is similar to \cite[Lemma 4.1 (\romannumeral3)]{dang2022}. 
For $p\in[1,2),$ \begin{align*}
&\quad \int_0^{\infty}\int_0^1|G^{n,\tau}_2(s,x,y)|^pe^{-\beta ps}dsdy\leq \int_0^{\infty}(1+C\sum_{j=1}^{n-1}|R_{1,j}R_{2,j}|^{[\frac{s}{\tau}]}|R_{1,j}|)^pe^{-\beta ps}ds\\
&\leq \int_0^{\infty}(1+\frac{1}{\sqrt{s}})^pe^{-\beta ps}ds\leq \frac {C}{\beta p}+C\Gamma(\frac{2-p}{2})(\beta p)^{\frac{p-2}{2}}.
\end{align*}
The proof is completed. 
\end{proof}

For nonnegative numbers $g_k,$ we let $\sum_{0\leq k<0}g_k=0$ and $\prod_{0\leq k<0}(1+g_k)=1.$

\begin{lemma}[Inverse Gr\"onwall inequality]\label{Gronwall}
Let $\{y_k\}_{k\in\mathbb N}$ and $\{g_k\}_{k\in\mathbb N}$ be nonnegative sequences and constant $c_0>0$. If $
y_n\ge c_0+\sum_{0\leq k<n}g_ky_k,\, n\ge 0,
$ then 
$
y_n\ge c_0\prod_{0\leq j<n} (1+g_j),\,n\ge 0. 
$
\end{lemma}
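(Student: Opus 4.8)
The plan is to argue by induction on $n$, reducing everything to a single telescoping identity that exactly matches the multiplicative structure of the target product $\prod_{0\leq j<n}(1+g_j)$.

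First I would introduce the abbreviation $P_n:=\prod_{0\leq j<n}(1+g_j)$, so that the empty-product convention stated just before the lemma gives $P_0=1$ together with the recursion $P_{n+1}=(1+g_n)P_n$. The one genuinely structural observation is the telescoping relation
\[
g_kP_k=(1+g_k)P_k-P_k=P_{k+1}-P_k,\qquad k\geq 0,
\]
which upon summation yields $\sum_{0\leq k<n}g_kP_k=P_n-P_0=P_n-1$.

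With this in hand the induction is direct. For the base case $n=0$, the empty-sum convention reduces the hypothesis to $y_0\geq c_0$, which is precisely $y_0\geq c_0P_0$. For the inductive step I would assume $y_k\geq c_0P_k$ for every $k<n$ and substitute these lower bounds into the hypothesis; since each $g_k$ is nonnegative, the inequality is preserved term by term, giving
\[
y_n\geq c_0+\sum_{0\leq k<n}g_ky_k\geq c_0+\sum_{0\leq k<n}g_k(c_0P_k)=c_0\Big(1+\sum_{0\leq k<n}g_kP_k\Big).
\]
Invoking the telescoping identity collapses the bracketed quantity to $P_n$, so $y_n\geq c_0P_n$, and the induction closes.

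I do not anticipate any serious obstacle here: the argument is a clean strong induction. The only points demanding care are the bookkeeping of the empty-sum and empty-product conventions at $n=0$ (which the lemma's preamble is precisely designed to handle), and the explicit use of the nonnegativity of $g_k$, without which substituting the inductive lower bounds into the sum could not be guaranteed to preserve the direction of the inequality. Isolating the telescoping identity first is what makes the rest routine.
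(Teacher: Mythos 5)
Your proof is correct, and it is organized differently from the paper's, in a way that makes it shorter. You induct directly on the desired conclusion: with the prefix products $P_k=\prod_{0\leq j<k}(1+g_j)$, you substitute the inductive lower bounds $y_k\geq c_0P_k$ into the hypothesis (using $g_k\geq 0$ to preserve the inequality) and collapse the result with the single telescoping identity $\sum_{0\leq k<n}g_kP_k=P_n-1$. The paper instead first establishes, by the same kind of strong induction, an intermediate expansion $y_n\geq c_0+\sum_{0\leq k<n}c_0g_k\prod_{k<j<n}(1+g_j)$ built from \emph{suffix} products; proving that expansion requires an interchange of the order of summation plus a separate combinatorial identity, $1+\sum_{j<k<m}g_k\prod_{j<i<k}(1+g_i)=\prod_{j<i<m}(1+g_i)$, and only afterwards does a telescoping step yield the product bound. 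The two arguments use the same ingredients (strong induction and telescoping of terms of the form $g_k\cdot\text{product}$), but your choice of prefix rather than suffix products lets the induction hypothesis plug in directly, eliminating the double-sum manipulation and the auxiliary identity; the paper's route, in exchange, makes explicit the discrete Duhamel-type unrolling of the recursion, which is the structure one would iterate if one wanted the sharper intermediate expansion itself rather than just the final product bound.
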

\begin{proof}
We first claim that  
$
y_n\ge c_0+\sum_{0\leq k<n}c_0g_k\prod_{k<j<n}(1+g_j).
$ 
The case of $n=0$ is obvious.  Suppose that $y_n\ge c_0+\sum_{0\leq k<n}g_ky_k$ holds for all $0\leq n<m,$ then we prove the case of $n=m$ by the induction argument. It follows that \begin{align*}
y_m&\ge c_0+c_0\sum_{0\leq k<m}g_k\Big(1+\sum_{0\leq j<k}g_j\prod_{j<i<k}(1+g_i)\Big)\\
&=c_0+\sum_{0\leq j<m}g_jc_0\Big(1+\sum_{j<k<m}g_k\prod_{j<i<k}(1+g_i)\Big).
\end{align*}
Moreover, we have 
\begin{align*}
&\quad \;1+\sum_{j<k<m}g_k\prod_{j<i<k}(1+g_i)\\
&=1+g_{j+1}+g_{j+2}(1+g_{j+1})+\cdots+g_{m-1}(1+g_{j+1})\cdots(1+g_{m-2})
=\prod_{j<i<m}(1+g_i),
\end{align*}
which yields the claim.  Note that
\begin{align*}
c_0\sum_{0\leq k<n}g_k\prod_{k<j<n}(1+g_j)&=c_0\sum_{0\leq k<n}\Big(\prod_{k\leq j<n}(1+g_j)-\prod_{k+1\leq j<n}(1+g_j)\Big)\\
&=c_0\prod_{0\leq j<n}(1+g_j),
\end{align*}
which together with the claim finishes the proof. 
\end{proof}

Introduce the norm on the space of random fields: for $p>0$ and $\beta>0,$
$\mathcal N_{\beta,p}(u):=\sup_{t\ge 0}\sup_{x\in[0,1]}\{e^{-\beta t}\|u(t,x)\|_p\},$ where $\|\cdot\|_p$ denotes the $L^p(\Omega)$-norm.  

\begin{theorem}\label{main1}
Under Assumptions \ref{assum_noise} and \ref{Assumption3}, there exists a unique  mild solution of the fully discrete scheme satisfying that $\sup_{x\in[0,1]}\mathbb E[|u^{n,\tau}(t_i,\kappa_n(x))|^p]\leq Ce^{Ct_i},$ for $i\ge 1,p\in[1,3).$ 
If in addition  Assumption        \ref{assump1} holds,  
then the fully discrete scheme is  weakly  intermittent of order $p\in(1,3)$. 
\end{theorem}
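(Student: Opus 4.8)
The plan is to split the argument into the well-posedness-plus-upper-bound part and the intermittency part, mirroring the structure of Propositions \ref{stop_solution} and \ref{ex_intermittent} but replacing the continuous-time tools by their discrete counterparts in Lemmas \ref{propG} and \ref{Gronwall}.

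\textbf{Step 1 (existence, uniqueness, and the upper moment bound).} I would set up a Picard iteration for the mild formulation \eqref{mild full} in the space of adapted random fields equipped with the weighted norm $\mathcal N_{\beta,p}$. After applying the moment inequalities for stochastic integrals against the L\'evy noise \eqref{Lambda} (valid for $p\in[1,3)$ precisely because $m_\lambda(p)<\infty$ by Assumption \ref{assum_noise}) together with $|\sigma(u)-\sigma(v)|\le L_\sigma|u-v|$, the contraction estimate reduces to controlling $\int_0^\infty\int_0^1|G^{n,\tau}_2(s,x,y)|^p e^{-\beta ps}\,dy\,ds$. By Lemma \ref{propG} this is bounded by $\frac{C}{\beta p}+C\Gamma(\tilde p)(\beta p)^{-\tilde p}$, which tends to $0$ as $\beta\to\infty$; choosing $\beta$ large makes the map a contraction and yields a unique fixed point. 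The same inequalities applied to \eqref{mild full}, with $|\sigma(u)|\le C(1+|u|)$ and Lemma \ref{propG}, produce a discrete Gr\"onwall inequality whose solution gives $\sup_{x\in[0,1]}\bbE[|u^{n,\tau}(t_i,\kappa_n(x))|^p]\le Ce^{Ct_i}$. This bound at once forces $\bar\gamma^{n,\tau}(p)<\infty$ for all $p\in[1,3)$, i.e.\ the upper half of weak intermittency.

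\textbf{Step 2 (lower bound and the key identity).} For the lower Lyapunov exponent I would note that, for $t=t_i$ and $s\in[t_k,t_{k+1})$, the Green argument equals $t_i-\kappa_\tau(s)-\tau=t_{i-1-k}$. Adapting the lower moment estimates of \cite[Lemmas 5.4 and 3.4]{Chong} to the discrete integral and using $J_0>0$ with $u_0\equiv c>0$ from Assumption \ref{assump1}, I expect a recursion of the form
\begin{align*}
\inf_{x\in[0,1]}\bbE[|u^{n,\tau}(t_i,\kappa_n(x))|^p]\ge c_0+c_1\tau\sum_{k=0}^{i-1}\Big(\int_0^1|G^{n,\tau}_2(t_{i-1-k},x,y)|^p\,dy\Big)\inf_{y\in[0,1]}\bbE[|u^{n,\tau}(t_k,\kappa_n(y))|^p]
\end{align*}
with $c_0,c_1>0$ for $p\in(1,2)$. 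The observation that decouples this convolution is the mass-conservation identity $\int_0^1 G^{n,\tau}_2(t,x,y)\,dy=1$: only the mode $l=0$ survives integration in $y$ (since $\int_0^1\bar e_l(\kappa_n(y))\,dy=\delta_{l,0}$ for $0\le l\le n-1$), and for $l=0$ one has $\lambda^n_0=0$, hence $R_{1,0}=R_{2,0}=1$. As $|\cdot|^p$ is convex and $dy$ is a probability measure on $[0,1]$, Jensen's inequality then gives $\int_0^1|G^{n,\tau}_2(t,x,y)|^p\,dy\ge 1$ uniformly in $x$ and $t$.

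\textbf{Step 3 (inverse Gr\"onwall and conclusion).} Feeding this uniform lower bound into the recursion collapses it to $M_i\ge c_0+c_1\tau\sum_{0\le k<i}M_k$, where $M_i:=\inf_{x\in[0,1]}\bbE[|u^{n,\tau}(t_i,\kappa_n(x))|^p]$. Lemma \ref{Gronwall} with $g_k\equiv c_1\tau$ yields $M_i\ge c_0(1+c_1\tau)^i$, whence $\underline\gamma^{n,\tau}(p)\ge\tau^{-1}\log(1+c_1\tau)>0$ for $p\in(1,2)$. I would then extend positivity to $p\in(1,3)$ via the Lyapunov inequality $\bbE[|u^{n,\tau}|^p]\ge(\bbE[|u^{n,\tau}|^{p_0}])^{p/p_0}$ with a fixed $p_0\in(1,2)$, giving $\underline\gamma^{n,\tau}(p)\ge\frac{p}{p_0}\underline\gamma^{n,\tau}(p_0)>0$ (equivalently, by convexity of $p\mapsto\underline\gamma^{n,\tau}(p)$).

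\textbf{Main obstacle.} The hard part will be establishing the discrete lower moment estimate producing the recursion of Step 2: unlike the upper bound, the lower bound for $p$-th moments of a compensated Poisson (jump) integral is delicate and must be transferred from the continuous setting of \cite{Chong} to the piecewise-constant discrete Green functions without degenerating the constants $c_0,c_1$. Once that recursion is in place, the mass-conservation identity together with the inverse Gr\"onwall inequality makes the exponential lower bound essentially automatic.
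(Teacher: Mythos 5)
Your proposal is correct, and its overall architecture coincides with the paper's: a Picard iteration in the weighted norm $\mathcal N_{\beta,p}$ combined with Lemma \ref{propG} for existence and the upper bound $\bar\gamma^{n,\tau}(p)<\infty$, then a Chong-type lower moment recursion closed by the inverse Gr\"onwall inequality (Lemma \ref{Gronwall}) for the lower bound. The genuine difference is the step that makes the recursion grow exponentially. The paper bounds $\int_0^1|G^{n,\tau}_2(t,x,y)|^p\,dy$ from below via a pointwise positivity estimate: there is a waiting time $t(n,\tau)$ with $t(n,\tau)/\tau\in\mathbb{N}$ such that $G^{n,\tau}_2(t,x,y)\ge\tfrac12$ for $t\ge t(n,\tau)$ (proved by adapting \cite[Lemma 4.1 (\romannumeral4)]{dang2022}); the recursion then runs only over indices $j\le i-\frac{t(n,\tau)}{\tau}-1$ and yields $\inf_x\mathbb E[|u^{n,\tau}(t_i,x)|^p]\ge CI_0^p(1+C\tau)^{i-\frac{t(n,\tau)}{\tau}}$. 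You instead exploit the mass-conservation identity $\int_0^1G^{n,\tau}_2(t,x,y)\,dy=1$ (only the $l=0$ mode survives the $y$-integration since $\int_0^1\bar e_l(\kappa_n(y))\,dy=\delta_{l,0}$, and $\lambda^n_0=0$ gives $R_{1,0}=R_{2,0}=1$), and Jensen's inequality then gives $\int_0^1|G^{n,\tau}_2(t,x,y)|^p\,dy\ge1$ uniformly in $x$ and $t\ge0$; both facts are easily checked from the definition of $G^{n,\tau}_2$. Your route is more elementary: it avoids the technical positivity lemma and the waiting-time shift, lets you apply Lemma \ref{Gronwall} exactly in the form stated, and produces the explicit rate $\underline\gamma^{n,\tau}(p)\ge\tau^{-1}\log(1+c_1\tau)$; the paper's route, on the other hand, establishes the stronger pointwise structure of the discrete Green function, which is of independent interest but not needed for this theorem. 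Your explicit extension from $p\in(1,2)$ to $p\in(1,3)$ via the Lyapunov inequality is also a step the paper leaves implicit in this proof (convexity is invoked only for the exact solution in Proposition \ref{ex_intermittent}). The one ingredient you must still import---and correctly flag as the main obstacle---is that the lower moment inequality of \cite[Lemmas 5.4 and 3.4]{Chong} applies to the discrete stochastic convolution with piecewise-constant integrand; the paper relies on exactly the same citation without further argument, so your proposal stands on equal footing there.
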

\begin{proof} The proof is split into two steps. 

\textit{Step 1: Intermittent upper bound: $\bar{\gamma}^{n,\tau}(p)<\infty,\,p\in(1,3).$} 
Based on the mild form \eqref{mild full} of the numerical solution, the proof of the existence and uniqueness of the solution is standard by a Picard iteration argument. We refer to  \cite[Proposition 4.1]{dang2022} on  a  similar proof for the Gaussian noise case. When $p\in[2,3),$ applying the  maximal inequality (see e.g. \cite{Levywave, maximal_ineq}), the Minkowski inequality,  and Assumption  \ref{assum_noise}  
gives 
\begin{align*}
&\mathbb E[|u^{n,\tau}(t,x)|^p]
\leq C+C(m_{\lambda}(p))^p\int_0^t\int_0^1|G^{n,\tau}_2(t-\kappa_{\tau}(s)-\tau,x,y)|^p\times\\
&\quad (1+\bbE [\|u^{n,\tau}(\kappa_{\tau}(s),\kappa_n(y))\|_p^p])dsdy+C(m_{\lambda}(2))^{p}\times\\
&\quad \Big(\int_0^t\int_0^1|G^{n,\tau}_2(t-\kappa_{\tau}(s)-\tau,x,y)|^2(1+\|u^{n,\tau}(\kappa_{\tau}(s),\kappa_n(y))\|^2_p)dsdy\Big)^{\frac p2}.
\end{align*}
 When $p\in[1,2),$  using  the maximal inequality again, we obtain    
\begin{align*}
&\quad \bbE[|u^{n,\tau}(t,x)|^p]\\
&\leq C+C\int_0^t\int_0^1|G^{n,\tau}_2(t-\kappa_{\tau}(s)-\tau,x,y)|^p(1+\bbE[|u^{n,\tau}(\kappa_{\tau}(s),\kappa_n(y))|^p])dsdy. 
\end{align*} 
Multiplying $e^{-p\beta t}$ with $\beta>0$ on both sides of the above equation, and combining Lemma \ref{propG} yield  
\begin{align*}
&e^{-\beta pt}\|u^{n,\tau}(t,x)\|^p_p\leq e^{-\beta pt}C+C(1+(\mathcal N_{\beta,p}(u^{n,\tau}))^p)\times\\&\quad \Big[\Big(\int_0^t\int_0^1|G^{n,\tau}_2(t-\kappa_{\tau}(s)-\tau,x,y)|^2e^{-2\beta(t-s)}dyds\Big)^{\frac p2} \\
&\quad +C\int_0^t\int_0^1|G^{n,\tau}_2(t-\kappa_{\tau}(s)-\tau,x,y)|^pe^{-\beta p(t-s)}dyds\Big]\leq C+C(\beta^{-1})(\mathcal N_{\beta,p}(u^{n,\tau}))^p,
\end{align*}
where $C(\beta^{-1})$ is a polynomial of $\beta^{-1}$ satisfying $\lim_{\beta\to\infty}C(\beta^{-1})=\infty.$

The remaining proof is similar to that of \cite[Proposition 4.1]{dang2022} and thus is omitted. As a consequence, we can obtain $\sup_{x\in[0,1]}\mathbb E[|u^{n,\tau}(t_i,\kappa_n(x))|^p]\leq Ce^{Ct_i},$ which implies the intermittent upper bound.

 \textit{Step 2: Intermittency lower bound: $\underline{\gamma}^{n,\tau}(p)>0,\,p\in(1,3).$} Applying \cite[Lemmas 5.4 and 3.4]{Chong} yields that for $p\in(1,2),$
\begin{align*}
&\quad \mathbb E[|u^{n,\tau}(t_i,x)|^p]\\
&\ge CI_0^p+C\mathbb E\Big[\Big|\int_0^{t_i}\int_0^1G^{n,\tau}_2(t_i-\kappa_{\tau}(s)-\tau,x,y)\sigma(u^{n,\tau}(\kappa_{\tau}(s),\kappa_n(y)))\Lambda (ds,dy)\Big|^p\Big]\\
&\ge CI_0^p+C\int_0^{t_i}\int_0^1|G^{n,\tau}_2(t_i-\kappa_{\tau}(s)-\tau,x,y)|^pdy\inf_{y\in[0,1]}\mathbb E[|u^{n,\tau}(\kappa_{\tau}(s),y)|^p]ds.
\end{align*}
Note that a similar proof to \cite[Lemma 4.1 (\romannumeral4)]{dang2022} gives that there is a number $t(n,\tau)>0$ with $\frac{t(n,\tau)}{\tau}$ being an integer, so that $G^{n,\tau}_2(t,x,y)\ge \frac12$ when $t\ge t(n,\tau).$ Thus we have 
$
\mathbb E[|u^{n,\tau}(t_i,x)|^p]\ge CI_0^p+C\tau\sum_{j=0}^{i-\frac{t(n,\tau)}{\tau}-1}\inf_{y\in[0,1]}\mathbb E[|u^{n,\tau}(j\tau,y)|^p],$ 
which together with Lemma \ref{Gronwall} yields 
$\inf_{x\in[0,1]}\mathbb E[|u^{n,\tau}(t_i,x)|^p]\ge  CI_0^p(1+C\tau)^{i-\frac{t(n,\tau)}{\tau}}.$ This implies that the $p$th lower moment Lyapunov exponent of the numerical solution is positive, which completes the proof. 
\end{proof}

\subsection{Preservation of the path property}
In this subsection, we investigate that the numerical solution of the fully discrete scheme preserves the path property \eqref{path_ex2} of the exact solution. 
\begin{theorem}\label{prop3.3}
Let Assumptions    \ref{assum_noise} and \ref{Assumption3} hold and  $\sigma$ be bounded. Then for any $t\in(0,T]$ and $h\in(0,1\wedge t),$ 
\begin{align}\label{cadlag}\sup_{n,\tau}\mathbb E\Big[|\textrm{osc}_r(u^{n,\tau}(t+h),u^{n,\tau}(t))\textrm{osc}_r(u^{n,\tau}(t-h),u^{n,\tau}(t))|^2\Big]\leq Ch^{1+\delta}
\end{align} 
holds for some $\delta>0$ and any  $r<-\frac12.$
\end{theorem}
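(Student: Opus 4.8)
The plan is to reduce the oscillations to the spectral representation of the discrete Green functions and then reproduce, at the discrete level, the argument behind Proposition \ref{path_pro}. Throughout I abbreviate $O_+:=\textrm{osc}_r(u^{n,\tau}(t+h),u^{n,\tau}(t))$ and $O_-:=\textrm{osc}_r(u^{n,\tau}(t),u^{n,\tau}(t-h))$, so that \eqref{cadlag} reads $\sup_{n,\tau}\mathbb{E}[O_+^2O_-^2]\le Ch^{1+\delta}$. First I would establish the announced comparison between $\|\cdot\|_{H^r}$ and its discrete counterpart: since $u^{n,\tau}(t,\cdot)$ is piecewise constant on the grid $\{x_j\}$, its continuous Fourier coefficients are recovered from the $n$ discrete coefficients $\{\hat v_l\}_{l=0}^{n-1}$ through an explicit aliasing factor, and for $r<-\frac12$ the resulting weighted series converges, giving two-sided bounds $\|v\|_{H^r}^2\asymp\sum_{l=0}^{n-1}\rho_l^{(r)}|\hat v_l|^2$ with constants independent of $n$. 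This is what lets me insert $G^{n,\tau}_2(s,x,y)=\sum_{l=0}^{n-1}(R_{1,l}R_{2,l})^{[s/\tau]}R_{1,l}e_l(\kappa_n(x))\bar e_l(\kappa_n(y))$ directly into $O_\pm$ and estimate mode by mode, using $|R_{1,l}R_{2,l}|^{[s/\tau]}\lesssim e^{-c\lambda_l^n s}$ as a discrete surrogate for $e^{-4\pi^2l^2 s}$.

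Next, using the mild form \eqref{mild full}, I would split each increment at the time level $t$ (resp.\ $t-h$). Writing $\Delta^+u:=u^{n,\tau}(t+h)-u^{n,\tau}(t)$, the stochastic part decomposes as $\Delta^+u=D_1+N_1$, where $N_1$ carries the noise on $(t,t+h]$ and $D_1$ integrates the Green difference $G^{n,\tau}_2(t+h-\cdots)-G^{n,\tau}_2(t-\cdots)$ against the noise on $[0,t]$; symmetrically $\Delta^-u=D_2+N_2$ with $N_2$ on $(t-h,t]$ and $D_2$ a Green-difference term on $[0,t-h]$. The mode-$l$ Green difference carries the factor $(1-e^{-c\lambda_l^n h})$, which vanishes for $l=0$ and, combined with the discrete bound $\int_0^1|G^{n,\tau}_2(s,x,y)|^2\,dy\lesssim 1+s^{-1/2}$ (the analogue of \cite[Lemma 4.1]{dang2022} used already in Lemma \ref{propG}, uniform in $n,\tau$), yields $\mathbb{E}\|N_i\|_{H^r}^2\lesssim h$ and $\mathbb{E}\|D_i\|_{H^r}^2\lesssim h^{1+\delta}$ with $\delta=-r-\frac12>0$; the second bound exploits the interpolation $(1-e^{-c\lambda_l^n h})^2\lesssim(\lambda_l^n h)^{2\theta}$ and the summability granted by $r<-\frac12$.

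The product is then controlled by conditioning on the disjoint noise increments. From $O_+^2\le 2\|N_1\|_{H^r}^2+2\|D_1\|_{H^r}^2$, and since the noise on $(t,t+h]$ is independent of $\mathcal{F}_t$ while the predictable integrand is bounded by $\|\sigma\|_\infty$, the $L^2$-isometry together with $m_\lambda(2)<\infty$ (Assumption \ref{assum_noise}) gives the deterministic conditional bound $\mathbb{E}[\|N_1\|_{H^r}^2\mid\mathcal{F}_t]\lesssim h$; hence $\mathbb{E}[\|N_1\|_{H^r}^2O_-^2]\le h\,\mathbb{E}[O_-^2]\lesssim h^2$, where $\mathbb{E}[O_-^2]\lesssim h$ comes from the same second-moment bounds. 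The remaining term $\mathbb{E}[\|D_1\|_{H^r}^2O_-^2]$ I would split once more at the level $t-h$: the cross pieces in which one factor is $\mathcal{F}_{t-h}$-measurable and the other is carried by the noise on $(t-h,t]$ are again closed by conditioning and the two second-moment estimates, producing contributions of size $h\cdot h^{1+\delta}$ and $h^{1+\delta}\cdot h^{1+\delta}$, while the pieces built from the same noise interval are treated exactly as in the continuous case (Proposition \ref{path_pro}), now with the discrete Green-difference estimates in place of the continuous ones. Taking $\delta$ a suitable positive number then yields $\mathbb{E}[O_+^2O_-^2]\le Ch^{1+\delta}$.

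The crux, and the step I expect to cost the most, is proving the discrete Green-function estimates \emph{uniformly} in $(n,\tau)$: both the short-time $L^2$-in-space bound and, more delicately, the temporal-increment (Green-difference) estimate $\sum_{l}\rho_l^{(r)}(1-e^{-c\lambda_l^n h})^2\int_0^1|G^{n,\tau}_2|^2\,dy\lesssim h^{1+\delta}$, together with the verification that the norm-comparison constants do not degenerate as $n\to\infty$. These rely on the detailed spectral analysis of $R_{1,l},R_{2,l}$ under Assumption \ref{Assumption3} that is developed in Section \ref{sec5}; once they are available with constants independent of the discretization parameters, passing to $\sup_{n,\tau}$ is immediate and \eqref{cadlag} follows.
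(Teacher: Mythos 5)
Your skeleton does match the paper's strategy (discrete Fourier representation of $u^{n,\tau}$, the one-sided norm comparison \eqref{sob_rea} — note only the upper bound is needed and proved there, not the two-sided equivalence you assert — splitting each increment into a Green-difference part hitting old noise plus a new-noise part, and closing the disjoint-interval products by conditioning with $\|\sigma\|_\infty<\infty$). The genuine gap is in the \emph{same-interval} products that your final splitting produces: $\mathbb{E}[\|D_1''\|_{H^r}^2\|N_2\|_{H^r}^2]$ (both factors built from the noise on $(t-h,t]$) and $\mathbb{E}[\|D_1'\|_{H^r}^2\|D_2\|_{H^r}^2]$ (both on $[0,t-h]$) — precisely the paper's terms $\mathcal A_{2,2}$ and $\mathcal A_{1,2}$. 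For these, conditioning gives nothing, and the natural Cauchy--Schwarz bound $\mathbb{E}[|I|^2|J|^2]\le(\mathbb{E}|I|^4\,\mathbb{E}|J|^4)^{1/2}$ requires fourth moments of L\'evy stochastic integrals, which do \emph{not} exist under Assumption \ref{assum_noise} alone ($m_\lambda(p)<\infty$ only for $p<3$). Deferring to ``treated exactly as in the continuous case (Proposition \ref{path_pro})'' is not an argument: that proposition's proof is external to the paper, and this is exactly where the work lies. The paper resolves it by the reduction made in the first line of its proof (working with the drift-free truncated noise \eqref{trun_noise}), after which the fourth-moment/Cauchy--Schwarz structure — visible as the $\bigl[\int\cdots\int\cdots\bigr]^{1/2}$ factors in its bounds for $\mathcal A_{1,2}$, $\mathcal A_{2,2}$ — is legitimate, and by distributing the $h$-smallness so that each factor contributes $h^{\frac12+\delta}|\lambda^n_\cdot|^{\delta}$, the losses $|\lambda^n_\cdot|^{\delta}$ being absorbed by $r<-\frac12$. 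Your proposal contains no substitute for this step.

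Two further points would make your intermediate claims false as stated. First, the surrogate $|R_{1,l}R_{2,l}|^{[s/\tau]}\lesssim e^{-c\lambda^n_l s}$ fails for high modes when $\theta\in(\frac12,1]$: Assumption \ref{Assumption3}(iii) puts no bound on $n^2\tau$, and $R_{1,l}R_{2,l}$ can be negative and close to $-1+\epsilon$, so the decay per time step is only geometric, far slower than $e^{-c\lambda^n_l\tau}$; this is why the paper splits the modes as $A_1\cup A_2$ in \eqref{set_split}--\eqref{set_prop} and treats $A_2$ via $|R_{1,l}R_{2,l}|\le 1-\epsilon$ and $\frac1l\lesssim\sqrt\tau$, and why your interpolation $(1-e^{-c\lambda^n_l h})^2\lesssim(\lambda^n_l h)^{2\gamma}$ for the discrete difference factor $1-(R_{1,l}R_{2,l})^{[\frac{t+h}{\tau}]-[\frac t\tau]}$ is only valid on $A_1$. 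Second, the bounds $\mathbb{E}\|N_1\|^2_{H^r}\lesssim h$ and $\mathbb{E}[O_-^2]\lesssim h$ cannot hold uniformly in $\tau$ when $h<\tau$: if a grid point is crossed, the new-noise piece lives on a full $\tau$-interval and is of order $\tau\gg h$. The theorem survives only because the product $O_+^2O_-^2$ vanishes unless both increments are nontrivial, which forces $h>\tau/2$; this is the paper's Case~1 ($h\ge\tau$) / Case~2 ($h<\tau$) analysis, which your proposal never performs. Finally, you treat only $u_0\equiv 0$: the theorem allows a general bounded $u_0$, and the deterministic term $\int_0^1 G^{n,\tau}_1(t,x,y)u_0(\kappa_n(y))dy$ needs its own increment estimate (the paper takes $\gamma=-r/2$ to get the bound $h^{-2r}\le h^{1+\delta}$). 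These omissions — above all the same-interval products — mean the proof is not complete as proposed.
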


\begin{proof}
Without loss of generality, we assume that $\tilde b=0$ in \eqref{trun_noise}. First  suppose $u_0\equiv0$, and we consider \begin{align}\label{consi_u}
u^{n,\tau}(t)=\int_0^t\int_0^1G^{n,\tau}_2(t-\kappa_{\tau}(s)-\tau,x,y)\sigma(u^{n,\tau}(\kappa_{\tau}(s),\kappa_n(y)))\Lambda(ds,dy).
\end{align} 
Suppose that $u^{n,\tau}(t)=\sum_{l=0}^{n-1}a^{n,\tau}_{l}(t)f_l,$ where 
\begin{align}\label{a_l}
a^{n,\tau}_{l}(t)&:=(R_{1,l}R_{2,l})^{[\frac {t}{\tau}]}R_{1,l}\sqrt n\int_0^t\int_0^1(R_{1,l}R_{2,l})^{-[\frac{\kappa_{\tau}(s)+\tau}{\tau}]}\mathbf 1_{\{t-\kappa_{\tau}(s)-\tau\ge 0\}}(s)\times\notag\\
&\quad \;\sigma(u^{n,\tau}(\kappa_{\tau}(s),\kappa_n(y)))\bar{e}_l(\kappa_n(y))\Lambda (ds,dy)=:(R_{1,l}R_{2,l})^{[\frac {t}{\tau}]}R_{1,l}\sqrt nI_0^t(l).
\end{align}

To proceed, we establish the relation between the norm in $H^r$ with $r\leq 0$ and its discrete counterpart.  Let $(v(x_0),\ldots,v(x_{n-1}))\in \mathbb C^n$ with $x_i=\frac in,i=0,\ldots,n-1,$   and define the function $v(x):=v(\kappa_n(x)),x\in[0,1].$ Suppose that $\sum_{j=0}^{n-1}(1-\lambda^n_j)^r|\frac1n \sum_{l=0}^{n-1}v(x_l)e^{-2\pi\mathbf ijx_l}|^2<\infty.$ We aim to show that $\bbE[\|v\|^2_{H^r}]<\infty.$ Let the expansion of $v$ in $L^2(0,1)$ be $v=\sum\limits_{j=0}^{\infty}v_je_j,$ where 
$
v_j=\sum\limits_{r=0}^{n-1}v(x_r)e^{-2\pi\mathbf ijx_r}\frac1n\big(\frac{e^{-2\pi\mathbf ijn^{-1}}-1}{-2\pi\mathbf ijn^{-1}}\big)=:\frac{1}{\sqrt n}\tilde v_j\alpha^n_j
$
 with $\alpha^n_j=\frac{e^{-2\pi\mathbf ijn^{-1}}-1}{-2\pi\mathbf ijn^{-1}},\;\tilde v_j=\frac1{\sqrt n}\sum_{r=0}^{n-1}v(x_r)e^{-2\pi\mathbf ijx_r}.$ Then we have 
 \begin{align}
 &\quad \bbE[\|v\|^2_{H^r}]=\sum_{j=0}^{\infty}\frac1n|\tilde v_j|^2|\alpha^n_j|^2(1+4\pi^2j^2)^r\notag\\
 &=\sum_{j=0}^{n-1}\Big(\sum_{l=0}^{\infty}(1+4\pi^2(j+ln)^2)^r|\alpha^n_{j+ln}|^2\Big)|\tilde v_j|^2\frac1n\notag\\
 &\leq \sum_{j=0}^{n-1}(1+4\pi^2j^2)^r\Big(\sum_{l=0}^{\infty}|\alpha^n_{j+ln}|^2\Big)|\tilde v_j|^2\frac1n\leq  \sum_{j=0}^{n-1}(1+4\pi^2j^2)^r\sum_{l=0}^{\infty}(\frac{j}{j+ln})^2|\tilde v_j|^2\frac1n\notag\\
 &\leq C\sum_{j=0}^{n-1}(1-\lambda^n_j)^r|\tilde v_j|^2\frac1n<\infty,\label{sob_rea}
 \end{align}
 where we use the relations $e^{2\pi\mathbf i(n+j)x_r}=e^{2\pi\mathbf ijx_r},r=0,\ldots,n-1$ and $\frac{\alpha^n_{j+n}}{\alpha^n_j}=\frac{j}{j+n}.$ 
  
Noting  that 
 \begin{align*}
 &\quad a^{n,\tau}_{l}(t+h)-a^{n,\tau}_{l}(t)=(R_{1,l}R_{2,l})^{[\frac{t+h}{\tau}]}R_{1,l}\sqrt nI^{t+h}_0(l)-(R_{1,l}R_{2,l})^{[\frac{t}{\tau}]}R_{1,l}\sqrt nI^t_0(l)\\
 &=-(R_{1,l}R_{2,l})^{[\frac{t}{\tau}]}R_{1,l}\sqrt n\Big[\big(1-(R_{1,l}R_{2,l})^{[\frac{t+h}{\tau}]-[\frac{t}{\tau}]}\big)I^t_0(l)-(R_{1,l}R_{2,l})^{[\frac{t+h}{\tau}]-[\frac{t}{\tau}]}I^{t+h}_t(l)\Big],
 \end{align*} and 
 \begin{align*}
 &\quad a^{n,\tau}_{l}(t-h)-a^{n,\tau}_{l}(t)=(R_{1,l}R_{2,l})^{[\frac{t-h}{\tau}]}R_{1,l}\sqrt nI^{t-h}_0(l)-(R_{1,l}R_{2,l})^{[\frac{t}{\tau}]}R_{1,l}\sqrt nI^t_0(l)\\
 &=-(R_{1,l}R_{2,l})^{[\frac{t-h}{\tau}]}R_{1,l}\sqrt n\Big[\big((R_{1,l}R_{2,l})^{[\frac{t}{\tau}]-[\frac{t-h}{\tau}]}-1\big)I^{t-h}_0(l)\\
 &\quad +(R_{1,l}R_{2,l})^{[\frac{t}{\tau}]-[\frac{t-h}{\tau}]}I^t_{t-h}(l)\Big],
 \end{align*}
 we have 
 \begin{align}\label{mathcal Ai}
 &\quad \|u^{n,\tau}(t+h)-u^{n,\tau}(t)\|^2_{H^r}\|u^{n,\tau}(t-h)-u^{n,\tau}(t)\|^2_{H^r}\notag\\
 &\leq C\sum_{l,j=0}^{n-1}(1-\lambda^n_l)^r(1-\lambda^n_j)^r\sum_{i=1}^4|\mathcal A_i(l,j)|^2,
 \end{align}
 where 
 \begin{align*}
 \mathcal A_1(l,j)&=(R_{1,l}R_{2,l})^{[\frac{t}{\tau}]}R_{1,l}(R_{1,j}R_{2,j})^{[\frac{t-h}{\tau}]}R_{1,j}\big(1-(R_{1,l}R_{2,l})^{[\frac{t+h}{\tau}]-[\frac{t}{\tau}]}\big)\times\\
 &\quad \;\big((R_{1,j}R_{2,j})^{[\frac{t}{\tau}]-[\frac{t-h}{\tau}]}-1\big)I^t_0(l)I^{t-h}_0(j),\\
 \mathcal A_{2}(l,j)&=(R_{1,l}R_{2,l})^{[\frac{t}{\tau}]}R_{1,l}(R_{1,j}R_{2,j})^{[\frac{t-h}{\tau}]}R_{1,j}\big(1-(R_{1,l}R_{2,l})^{[\frac{t+h}{\tau}]-[\frac{t}{\tau}]}\big)\times\\
 &\quad \;(R_{1,j}R_{2,j})^{[\frac{t}{\tau}]-[\frac{t-h}{\tau}]}I^t_0(l)I^t_{t-h}(j),\\
 \mathcal A_3(l,j)&=(R_{1,l}R_{2,l})^{[\frac{t}{\tau}]}R_{1,l}(R_{1,j}R_{2,j})^{[\frac{t-h}{\tau}]}R_{1,j}(R_{1,l}R_{2,l})^{[\frac{t+h}{\tau}]-[\frac{t}{\tau}]}\times\\
 &\quad \;\big((R_{1,j}R_{2,j})^{[\frac{t}{\tau}]-[\frac{t-h}{\tau}]}-1\big)I^{t+h}_t(l)I^{t-h}_0(j),\\
 \mathcal A_4(l,j)&=(R_{1,l}R_{2,l})^{[\frac{t}{\tau}]}R_{1,l}(R_{1,j}R_{2,j})^{[\frac{t-h}{\tau}]}R_{1,j}\times\\
 &\quad \;(R_{1,l}R_{2,l})^{[\frac{t+h}{\tau}]-[\frac{t}{\tau}]}(R_{1,j}R_{2,j})^{[\frac{t}{\tau}]-[\frac{t-h}{\tau}]}I^{t+h}_t(l)I^t_{t-h}(j).
 \end{align*}
Below we give the estimates of terms $\mathcal A_i,i=1,2,3,4,$ respectively.  Let   $0\leq t_{i}\leq t<t_{i+1}.$ And without loss of generality, we suppose that $n$ is odd since the even case can be proved similarly.

 \textbf{Estimate of $\mathcal A_1.$} Term $\mathcal A_1$ is further split as $\mathcal A_1=\mathcal A_{1,1}+\mathcal A_{1,2},$ where 
 \begin{align*}
 &\mathcal A_{1,1}(l,j):=(R_{1,l}R_{2,l})^{[\frac{t}{\tau}]}R_{1,l}(R_{1,j}R_{2,j})^{[\frac{t-h}{\tau}]}R_{1,j}\big(1-(R_{1,l}R_{2,l})^{[\frac{t+h}{\tau}]-[\frac{t}{\tau}]}\big)\times \\&\quad\;\big((R_{1,j}R_{2,j})^{[\frac{t}{\tau}]-[\frac{t-h}{\tau}]}-1\big)
I_{t-h}^t(l)I^{t-h}_0(j),\\
&\mathcal A_{1,2}(l,j):= (R_{1,l}R_{2,l})^{[\frac{t}{\tau}]}R_{1,l}(R_{1,j}R_{2,j})^{[\frac{t-h}{\tau}]}R_{1,j}\big(1-(R_{1,l}R_{2,l})^{[\frac{t+h}{\tau}]-[\frac{t}{\tau}]}\big)\times \\
&\quad\; \big((R_{1,j}R_{2,j})^{[\frac{t}{\tau}]-[\frac{t-h}{\tau}]}-1\big)I^{t-h}_0(l)I^{t-h}_0(j).
 \end{align*} 
Recall the definition of the discrete Green function. Noting that $t-h-\kappa_{\tau}(s)-\tau<0$ when $s\in[[\frac{t-h}{\tau}]\tau,t-h),$ the second moment of $\mathcal A_{1,1}$ can be estimated as  \begin{align*}
 &\mathbb E[|\mathcal A_{1,1}(l,j)|^2]\leq C(R_{1,l}R_{2,l})^{2[\frac{t}{\tau}]}(R_{1,l})^2(R_{1,j}R_{2,j})^{2[\frac{t-h}{\tau}]}(R_{1,j})^2\times\\
 &\quad \;\big(1-(R_{1,l}R_{2,l})^{[\frac{t+h}{\tau}]-[\frac{t}{\tau}]}\big)^2\big((R_{1,j}R_{2,j})^{[\frac{t}{\tau}]-[\frac{t-h}{\tau}]}-1\big)^2 \times\\
 &\quad\;\int_0^{[\frac{t-h}{\tau}]\tau}|R_{1,j}R_{2,j}|^{-2[\frac{\kappa_{\tau}(s)+\tau}{\tau}]}ds
 \int_{t-h}^t|R_{1,l}R_{2,l}|^{-2[\frac{\kappa_{\tau}(s)+\tau}{\tau}]}ds.
 \end{align*}
  
\textit{Case 1: $h\ge \tau.$} In this case, $t-h<t_{i}.$  Note that  $R_{1,n-j}=R_{1,j},R_{2,n-j}=R_{2,j},j=1,\ldots,[\frac n2]$ due to $\lambda^n_{n-j}=\lambda^n_j,j=1,\ldots,[\frac n2]. $  
We split the set $\{j:1,2,\ldots,\big[\frac{n}{2}\big]\}$ as
\begin{align}\label{set_split}
\big\{j:1,2,\ldots,\big[\frac{n}{2}\big]\big\}&=\Big\{j:R_{1,j}R_{2,j}\ge \frac{1}{2}\Big\}\cup \Big\{j:-1+\epsilon\leq R_{1,j}R_{2,j}<\frac{1}{2}\Big\}=:A_1\cup A_2.
\end{align}
Denote $R_{3,j}:=(R_{1,j}R_{2,j})^{-1}-1.$ For $j\in A_1,$ $\frac{1}{2}<R_{2,j}<1$ and $-\lambda^n_j\tau\leq R_{3,j}\leq -2\lambda^n_j\tau$, and for $j\in A_2$, $|R_{1,j}R_{2,j}|\leq 1-\epsilon$. Moreover, 
\begin{align}\label{set_prop}A_1\subset \Big\{j:1\leq j\leq \frac{1}{4}\sqrt{\frac{1}{(2-\theta)\tau}}\Big\} \text{ and } A_2\subset \Big\{j:\frac{1}{2\pi}\sqrt{\frac{1}{(2-\theta)\tau}}<j\leq \big[\frac{n}{2}\big]\Big\}.
\end{align}

For  $j\in A_1,$ by the inequality $1-e^{-x}\leq x^{\gamma}$ for $x>0,\gamma\in(0,1),$ we have 
\begin{align}\label{expo_esti}
1-(R_{1,j}R_{2,j})^{[\frac{t}{\tau}]-[\frac{t-h}{\tau}]}= 1-e^{-([\frac{t}{\tau}]-[\frac{t-h}{\tau}])\ln (1+R_{3,j})}\leq C\Big|\Big(\big[\frac{t}{\tau}\big]-\big[\frac{t-h}{\tau}\big]\Big)\lambda^n_j\tau\Big|^{\gamma},
\end{align} 
and for $j\in A_2,$ we have $|R_{1,j}R_{2,j}|\leq 1-\epsilon.$
Hence we obtain that for $j\in A_1,$
\begin{align}
 \mathcal L_0&:= (R_{1,j})^2(1-(R_{1,j}R_{2,j})^{[\frac{t}{\tau}]-[\frac{t-h}{\tau}]})^2\int_0^{[\frac{t-h}{\tau}]\tau}|R_{1,j}R_{2,j}|^{2([\frac{t-h}{\tau}]-[\frac{\kappa_{\tau}(s)+\tau}{\tau}])}ds\notag\\
&\leq C\Big|\Big(\big[\frac{t}{\tau}\big]-\big[\frac{t-h}{\tau}\big]\Big)\lambda^n_j\tau\Big|^{2\gamma}\frac{(R_{1,j})^2\tau}{1-(R_{1,j}R_{2,j})^2}\notag\\
&\leq C\Big|\Big(\big[\frac{t}{\tau}\big]-\big[\frac{t-h}{\tau}\big]\Big)\lambda^n_j\tau\Big|^{2\gamma}(2+(1-2\theta)\tau\lambda^n_j)^{-1}(-\tau\lambda^n_j)^{-1}\tau.\label{mathcal L0}
\end{align}
 When $\theta\in[0,\frac12],$ it holds $2+(1-2\theta)\tau\lambda^n_j\ge 2-\frac{\pi^2(1-2\theta)}{4(2-\theta)}\mathbf 1_{\{\theta\neq \frac12\}}>0$; and when $\theta\in(\frac12,1],$ it holds $2+(1-2\theta)\tau\lambda^n_j\ge 2.$ By taking  $\gamma=\frac{1}{2}$ in \eqref{mathcal L0}, we arrive at  $
 \mathcal L_0\leq C h.
$ For $j\in  A_2,$ 
\begin{align}\label{mathcal L1}
\mathcal J_0\leq C\tau\frac{1-(R_{1,j}R_{2,j})^{2(i-2)}}{1-(R_{1,j}R_{2,j})^2}\leq C\frac{1}{1-(1-\epsilon)^2}h.
\end{align}
Thus we obtain that $\mathbb E[|\mathcal A_{1,1}(l,j)|^2]\leq Ch^2.$

\textit{Case 2: $h< \tau.$} 
 In this case, $t-h\in(t_i,t_{i+1})$ or $t-h\in(t_{i-1},t_i);$ $t+h\in(t_i,t_{i+1})$ or $t+h\in(t_{i+1},t_{i+2}).$
 When $t+h\in(t_i,t_{i+1})$ or $t-h\in(t_i,t_{i+1}),$ we have that $1-(R_{1,l}R_{2,l})^{[\frac{t+h}{\tau}]-[\frac{t}{\tau}]}=0$ or $1-(R_{1,l}R_{2,l})^{[\frac{t}{\tau}]-[\frac{t-h}{\tau}]}=0$; When $t+h\in(t_{i+1},t_{i+2})$ and $t-h\in(t_{i-1},t_{i})$ hold simultaneously, we found that it happens only for $h\in(\frac{\tau}{2},\tau).$ And for this setting, the estimate is similar to $\textit{Case 1.}$ 
 
 Combining \textit{Cases 1-2} gives that $\mathbb E[|\mathcal A_{1,1}(l,j)|]\leq Ch^2.$ Hence, for $r<-\frac12$, we obtain  $\sum_{l,j=0}^{n-1}(1-\lambda^n_l)^r(1-\lambda^n_j)^r\mathbb E[|\mathcal A_{1,1}(l,j)|^2]\leq Ch^{2},\;h\in(0,1).$

 The second moment of $\mathcal A_{1,2}$ can be estimated as
 \begin{align*}
&\quad  \mathbb E[|\mathcal A_{1,2}(l,j)|^2]\leq C(R_{1,l}R_{2,l})^{2[\frac{t}{\tau}]}(R_{1,l})^2(R_{1,j}R_{2,j})^{2[\frac{t-h}{\tau}]}(R_{1,j})^2\times\\
 &\quad \big(1-(R_{1,l}R_{2,l})^{[\frac{t+h}{\tau}]-[\frac{t}{\tau}]}\big)^2\big((R_{1,j}R_{2,j})^{[\frac{t}{\tau}]-[\frac{t-h}{\tau}]}-1\big)^2\times\\
 &\quad \Big[\int_0^{[\frac{t-h}{\tau}]\tau}|R_{1,j}R_{2,j}|^{-4[\frac{\kappa_{\tau}(s)+\tau}{\tau}]}ds\int^{[\frac{t-h}{\tau}]\tau}_0|R_{1,l}R_{2,l}|^{-4[\frac{\kappa_{\tau}(s)+\tau}{\tau}]}ds\Big]^{\frac12}.
 \end{align*}
 
 \textit{Case 1:  $h\ge \tau.$} Similar to \textit{Case 1} in the estimate of $\mathcal A_{1,1},$ we have that for $j,l\in  A_1,$
 \begin{align*}
 &\quad \tilde{\mathcal L}_0:= (R_{1,j})^2(1-(R_{1,j}R_{2,j})^{[\frac{t}{\tau}]-[\frac{t-h}{\tau}]})^2\Big(\int_0^{[\frac{t-h}{\tau}]\tau}|R_{1,j}R_{2,j}|^{4([\frac{t-h}{\tau}]-[\frac{\kappa_{\tau}(s)+\tau}{\tau}])}ds\Big)^{\frac12}\\
 &\leq  C\Big|\Big(\big[\frac{t}{\tau}\big]-\big[\frac{t-h}{\tau}\big]\Big)\lambda^n_j\tau\Big|^{2\gamma} (R_{1,j})^2\Big((1-(R_{1,j}R_{2,j})^2)(1+(R_{1,j}R_{2,j})^2)\tau^{-1}\Big)^{-\frac12}\\
 &\leq C\Big|\Big(\big[\frac{t}{\tau}\big]-\big[\frac{t-h}{\tau}\big]\Big)\lambda^n_j\tau\Big|^{2\gamma} (\lambda^n_j)^{-\frac12}\leq C|\lambda^n_j|^{\delta}h^{\frac12+\delta},
 \end{align*}
 where in the last step we take $\gamma=\frac14+\frac{\delta}{2}$ with some $\delta\in(0,\frac12).$ For $j\in A_2,$ we have $\frac1j\leq C\sqrt{\tau}$ and thus $\tilde {\mathcal L}_0\leq C\sqrt h\leq Ch^{\frac12+\delta}|\lambda^n_j|^{\delta}.$

 \textit{Case 2: $h<\tau.$}
 The analysis in this case is 
 similar to that of $\mathcal A_{1,1}$ and thus is omitted. 
 
 Combining \textit{Cases 1-2} gives that $\mathbb E{|\mathcal A_{1,2}(l,j)|^2}\leq Ch^{1+2\delta}|\lambda^n_j|^{\delta}|\lambda^n_l|^{\delta}.$ 
 Hence, for $r<-\frac12$, there exists some $\delta_0$ such that $\delta<\delta_0,$ we obtain  $\sum_{l,j=0}^{n-1}(1-\lambda^n_l)^r(1-\lambda^n_j)^r\mathbb E[|\mathcal A_{1,2}(l,j)|^2]\leq Ch^{1+2\delta}.$

 \textbf{Estimate of $\mathcal A_2.$} Term $\mathcal A_2$ is further split as 
  $\mathcal A_{2}(l,j)=\mathcal A_{2,1}(l,j)+\mathcal A_{2,2}(l,j),$ where 
\begin{align*}
 &\mathcal A_{2,1}(l,j):=(R_{1,l}R_{2,l})^{[\frac{t}{\tau}]}R_{1,l}(R_{1,j}R_{2,j})^{[\frac{t-h}{\tau}]}R_{1,j}\big(1-(R_{1,l}R_{2,l})^{[\frac{t+h}{\tau}]-[\frac{t}{\tau}]}\big)\times\\
 &\quad\;(R_{1,j}R_{2,j})^{[\frac{t}{\tau}]-[\frac{t-h}{\tau}]}I^{t-h}_0(l)I^t_{t-h}(j),\\
 &\mathcal A_{2,2}(l,j):=(R_{1,l}R_{2,l})^{[\frac{t}{\tau}]}R_{1,l}(R_{1,j}R_{2,j})^{[\frac{t-h}{\tau}]}R_{1,j}\big(1-(R_{1,l}R_{2,l})^{[\frac{t+h}{\tau}]-[\frac{t}{\tau}]}\big)\times\\
 &\quad\; (R_{1,j}R_{2,j})^{[\frac{t}{\tau}]-[\frac{t-h}{\tau}]}I_{t-h}^t(l)I^t_{t-h}(j).
 \end{align*}
 The estimate of  $\mathcal A_{2,1}$ is similar to that of $\mathcal A_{1,1},$ and one can obtain $\mathbb E[|\mathcal A_{2,1}(l,j)|^2]\leq Ch^2.$ The proof is thus omitted. For the term $\mathcal A_{2,2},$ we have 
 \begin{align*}
 &\mathbb E[|\mathcal A_{2,2}(l,j)|^2]\leq C(R_{1,l})^2(R_{1,j})^2\big(1-(R_{1,l}R_{2,l})^{[\frac{t+h}{\tau}]-[\frac{t}{\tau}]}\big)^2\times\\
 &\Big(\int_{t-h}^t|R_{1,j}R_{2,j}|^{4[\frac{t-\kappa_{\tau}(s)-\tau}{\tau}]}ds\Big)^{\frac12}\Big(\int_{t-h}^t|R_{1,l}R_{2,l}|^{4[\frac{t-\kappa_{\tau}(s)-\tau}{\tau}]}\mathbf 1_{\{t-\kappa_{\tau}(s)-\tau\ge 0\}}(s)ds\Big)^{\frac12}.
 \end{align*}

 \textit{Case 1: $h\ge \tau.$} When  $l\in A_1,$ we use  \eqref{expo_esti}, and when $l\in A_2,$ we use $\frac1l\leq C\sqrt{\tau}$ to obtain 
 \begin{align*}
 (R_{1,l})^2\big(1-(R_{1,l}R_{2,l})^{[\frac{t+h}{\tau}]-[\frac{t}{\tau}]}\big)^2\Big(\int_{t-h}^t|R_{1,l}R_{2,l}|^{4[\frac{t-\kappa_{\tau}(s)-\tau}{\tau}]}ds\Big)^{\frac12}\leq C|\lambda^n_l|^{\delta}h^{\frac12+\delta}.
 \end{align*}
 
 \textit{Case 2: $h<\tau$.} For $t+h\in(t_i,t_{i+1}),$ we have $\mathbb E[|\mathcal A_{2,2}(l,j)|^2]=0.$ For $t+h\in(t_{i+1},t_{i+2}),t-h\in(t_i,t_{i+1}),$ by the definition of the discrete Green function, we obtain  $\mathbb E[|\mathcal A_{2,2}(l,j)|^2]=0.$ For $t+h\in(t_{i+1},t_{i+2}),t-h\in(t_{i-1},t_{i}),$ it holds $h\in(\frac{\tau}{2},\tau).$ And in this setting, the estimate is similar to that of \textit{Case 1}. 
 
 Combining \textit{Cases 1-2} leads to that $\mathbb E[|\mathcal A_{2,2}(l,j)|^2]\leq Ch^{1+\delta}|\lambda^n_l|^{\delta}.$ Hence, for $r<-\frac12$, there exists some $\delta_0$ such that $\delta<\delta_0,$ we obtain  $\sum_{l,j=0}^{n-1}(1-\lambda^n_l)^r(1-\lambda^n_j)^r\mathbb E[|\mathcal A_{2,2}(l,j)|^2]\leq Ch^{1+\delta}.$  
 
 \textbf{Estimate of $\mathcal A_3$.} 
Similar to the estimate of $\mathcal A_{1,1}$, we obtain $\mathbb E[|\mathcal A_3(l,j)|^2]\leq Ch^2.$
 
 \textbf{Estimate of $\mathcal A_4$.} It is straightforward that $\mathbb E[|\mathcal A_4(l,j)|^2]
  \leq Ch^2.$ 

Inserting estimates of $\mathcal A_i,i=1,2,3,4$ into \eqref{mathcal Ai} implies \eqref{cadlag} for the case of $u_0\equiv0.$  
 When the initial value $u_0$ is not identically zero, it suffices to estimate the  integral $\mathcal I^{n,\tau}_{0}(t,x):=\int_0^tG^{n,\tau}_1(t,x,y)u_0(\kappa_n(y))dy,t>0,x\in[0,1]$. It follows from the  expression $u_0(\kappa_n(y))=\sum_{j=0}^{n-1}u_{0,j}\bar e_j(\kappa_n(y))$ with $u_{0,j}=\int_0^1u_0(\kappa_n(y))\bar e_j(\kappa_n(y))dy$ 
that  
  \begin{align*}
&\quad  \mathcal I_0:=\|\mathcal I^{n,\tau}_0(t+h,\cdot)-\mathcal I^{n,\tau}_0(t,\cdot)\|^2_{H^r}\|\mathcal I^{n,\tau}_0(t-h,\cdot)-\mathcal I^{n,\tau}_0(t,\cdot)\|^2_{H^r}\\
&\leq C\sum_{j,l=0}^{n-1}(1-\lambda^n_j)^r(1-\lambda^n_l)^r(R_{1,j}R_{2,j})^{2[\frac{t}{\tau}]}(1-(R_{1,j}R_{2,j})^{[\frac{t+h}{\tau}]-[\frac{t}{\tau}]})^2|u_{0,j}|^2\times\\
&\quad (R_{1,l}R_{2,l})^{2[\frac{t-h}{\tau}]}(1-(R_{1,l}R_{2,l})^{[\frac{t}{\tau}]-[\frac{t-h}{\tau}]})^2|u_{0,l}|^2.
 \end{align*}
 When $h\ge \tau,$ we have \begin{align*}
 \mathcal I_0&\leq C\sum_{j,l=0}^{n-1}(1-\lambda^n_j)^r(1-\lambda^n_l)^r\Big|\Big([\frac{t+h}{\tau}]-[\frac{t}{\tau}]\Big)\lambda^n_j\tau\Big|^{2\gamma}\Big|\Big([\frac{t}{\tau}]-[\frac{t-h}{\tau}]\Big)\lambda^n_j\tau\Big|^{2\gamma}\times\\
 & \quad |u_{0,j}|^2|u_{0,l}|^2\leq C\Big(\int_0^1|u_0(\kappa_n(y))|^2dy\Big)^2h^{-2r},
 \end{align*}
 where in the last step we take $\gamma=-r/2.$ The case of $h<\tau$ is similar as before and thus is omitted. The proof is finished.  
 \end{proof}

In fact, the sequence $\{u^{n,\tau}\}_{n,\tau}$ has more fruitful properties. 
 We first show that 
$\sup\limits_{n,\tau}\sup\limits_{t\in[0,T]}\mathbb E[\|u^{n,\tau}(t)\|^2_H]\leq C.$ 
By \eqref{mild full}, we have  \begin{align*}
&\mathbb E[\|u^{n,\tau}(t)\|^2_H]\leq C\int_0^1\Big|\int_0^1G^{n,\tau}_1(t,x,y)u_0(\kappa_n(y))dy\Big|^2dx\\
&+C\int_0^1\mathbb E\Big[\Big|\int_0^t\int_0^1G^{n,\tau}_2(t-\kappa_{\tau}(s)-\tau,x,y)\sigma(u^{n,\tau}(\kappa_{\tau}(s),\kappa_n(y))) \Lambda(ds,dy)\Big|^2\Big]dx\\
&\leq C\sum_{j=0}^{n-1}(R_{1,j}R_{2,j})^{2[\frac{t}{\tau}]}|u_{0,j}|^2+C\int_0^t\Big(1+\frac{1}{\sqrt{[\frac{t-\kappa_{\tau}(s)-\tau}{\tau}]\tau+\tau}}\Big)\mathbf 1_{\{t-\kappa_{\tau}(s)-\tau\ge 0\}}(s)ds\\
&\leq C\int_0^1|u_0(\kappa_n(y))|^2dy+C\int_0^{[\frac{t}{\tau}]\tau}\Big(1+\frac{1}{\sqrt {[\frac{t}{\tau}]\tau-s}}\Big)ds\leq C(1+\sup_{x\in[0,1]}|u_0(x)|).
\end{align*}
Then we show the tightness of $\{u^{n,\tau}\}_{n,\tau}$. 
For each $ \rho>0$ and $t\in[0,T],$ let $\Gamma_{\rho,t}:=\{x\in H:\|x\|_H\leq R(\rho)\}.$ By the compact Sobolev embedding theorem, we have that $\Gamma_{\rho,t}$ is compact in $H^{r}$ with $r<-\frac12$, and 
\begin{align*}
\mathbb P(u^{n,\tau}(t)\in\Gamma_{\rho,t})
\ge 1-\frac{\sup_{n,\tau}\sup_{t\in[0,T]}\mathbb E[\|u^{n,\tau}(t)\|_H]}{R(\rho)}\ge 1-\frac{C}{R(\rho)}.
\end{align*} 
Taking $R(\rho)=C/\rho$ derives the tightness of $\{u^{n,\tau}\}_{n,\tau}$. 
In addition,  for $r<-\frac12,$  \begin{align*}
 &\mathbb E[\|u^{n,\tau}(h)-u^{n,\tau}(0)\|^2_{H^r}]\leq C\sum_{j=0}^{n-1}(1-\lambda^n_j)^r(1-(R_{1,j}R_{2,j})^{[\frac{h}{\tau}]})^{2}\mathbf 1_{\{h>\tau\}}|u_{0,j}|^2\\
 &+C\sum_{j=0}^{n-1}(1-\lambda^n_j)^r(R_{1,j}R_{2,j})^{2[\frac{h}{\tau}]}R^2_{1,j}\int_0^{[\frac{h}{\tau}]\tau}(R_{1,j}R_{2,j})^{-2\frac{\kappa_{\tau}(s)+\tau}{\tau}}ds\leq C(h^{-r}+h)\to0,
 \end{align*}
 as  $h\to0.$ 
Hence, based on \cite[Theorems 8.6 and 8.8]{Kurtz86}, as a corollary of  Theorem \ref{prop3.3}, 
 the numerical solution $\{u^{n,\tau}\}_{n,\tau}$  is weakly relatively compact in the Skorohod space $D([0,T];H^r)$ with $r<-\frac12$.

\section{Convergence  of fully discrete scheme}\label{sec_con}
 This section focuses on the convergence analysis of the fully discrete scheme. 
The analysis  is  based on error estimates between the discrete Green functions and the Green function of the exact solution, whose proofs are postponed to the next section.

\begin{lemma}\label{lemma4.8}
(\romannumeral1) Under Assumption \ref{Assumption3}, 
there is a constant $C>0$ such that for all $x\in[0,1]$,
\begin{align}\label{estimate}
\int_{0}^{\infty}\int_{0}^{1}|G(t,x,y)-G^{n,\tau}_2(t,x,y)|^2\,dydt\leq C\big(\frac{1}{n}+\sqrt{\tau}\big).
\end{align}
(\romannumeral2) Under Assumption \ref{Assumption3} (\romannumeral1)  (\romannumeral2) or $\theta=1$ or $\theta\in (\frac{1}{2},1)$ with $u_0\in H^1$,
for any $\alpha \in(\frac{1}{2}, 1)$, there is a positive constant $C:=C(\alpha)$ such that 
\begin{align}\label{green1}
\Big|\int_0^1\big(G(t,x,y)-G^{n,\tau}_1(t,x,y)\big)u_0(\kappa_n(y))\,dy\Big|^2\leq C\tau^{\alpha-\frac{1}{2}}\big(\big[\frac{t}{\tau}\big]\tau\big)^{-\alpha}+Cn^{1-2\alpha}t^{-\alpha}
\end{align}
for all $x\in[0,1], \;t\ge \tau>0$.
\end{lemma}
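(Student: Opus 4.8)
The plan is to reduce both estimates to a mode-by-mode comparison of symbols via the spectral representations and Parseval's identity, and then to control each mode by trading a consistency bound for the amplification factor against the exponential (heat) decay, using the free-exponent inequality $1-e^{-x}\le x^{\gamma}$ already exploited in Theorem \ref{prop3.3}. For part (i), I would first record the key orthogonality facts. Since $\kappa_n(y)=k/n$ on $[\tfrac{k}{n},\tfrac{k+1}{n})$, the step functions $\{e_l(\kappa_n(\cdot))\}_{l=0}^{n-1}$ are orthonormal in $L^2(0,1)$, because $\int_0^1 e_l(\kappa_n(y))\bar e_{l'}(\kappa_n(y))\,dy=\frac1n\sum_{k=0}^{n-1}e^{2\pi\mathbf i(l-l')k/n}=\delta_{l,l'}$. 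Expanding $\bar e_l(\kappa_n(y))$ in the full basis $\{e_p(y)\}_{p\in\bbZ}$ produces the aliasing coefficients $c_{l,p}=n\beta_p\,\mathbf 1_{\{p\equiv -l\,(\mathrm{mod}\,n)\}}$ with $\beta_p:=\int_0^{1/n}e^{-2\pi\mathbf ipy}\,dy$, so that $|n\beta_p|=\frac{n|\sin(\pi p/n)|}{\pi|p|}$. Plancherel's theorem then rewrites $\int_0^1|G-G^{n,\tau}_2|^2\,dy$ as a sum over $p\in\bbZ$ of $\big|e^{-4\pi^2p^2t}\bar e_p(x)-(R_{1,l(p)}R_{2,l(p)})^{[t/\tau]}R_{1,l(p)}\,e_{l(p)}(\kappa_n(x))\,n\beta_p\big|^2$, where $l(p)\in\{0,\dots,n-1\}$ is the residue of $-p$ modulo $n$.

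Next I would split the sum into the high-frequency part $\{|p|>n/2\}$ and the principal modes $\{|p|\le n/2\}$. The high part collects the exact heat tail $\sum_{|p|>n/2}e^{-8\pi^2p^2t}$ together with the aliased discrete copies, the latter weighted by $|n\beta_p|^2\le (n/|p|)^2$; using $\int_0^\infty\sum_{|p|>n/2}(8\pi^2p^2)^{-1}\asymp n^{-1}$ and the stability bound $|R_{1,l}R_{2,l}|\le1$ (from Assumption \ref{Assumption3}) to integrate the discrete copies, both contribute $O(1/n)$. For a principal mode I would telescope the discrete symbol against $e^{-4\pi^2p^2t}$ through $e^{\lambda^n_{l(p)}t}$: the eigenvalue defect $|\lambda^n_l+4\pi^2l^2|=O(l^4/n^2)$ combined with $|e^a-e^b|\le|a-b|\max(e^a,e^b)$ yields, after integrating in $t$, another $O(1/n)$; and the remaining factor $e^{\lambda^n_lt}$ versus $(R_{1,l}R_{2,l})^{[t/\tau]}R_{1,l}$ is the temporal consistency error of the rational factor $r(z)=\frac{1+(1-\theta)z}{1-\theta z}$. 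Here I would bound the temporal defect by $(\tau|\lambda^n_l|)^{\gamma}$ times the decay $e^{c\lambda^n_l t}$ and integrate; this produces $\tau^{2\gamma}\sum_l|\lambda^n_l|^{2\gamma-1}$, and \emph{choosing $\gamma=\tfrac14+\epsilon$ slightly above $\tfrac14$} makes the $\tau^{-2\epsilon}$ growth of the mode sum (under the coupling $n^2\tau\le r$) cancel the surplus $\tau^{2\epsilon}$, leaving exactly the clean $\sqrt\tau$ without a logarithmic loss.

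For part (ii) the same spectral reduction applies, except that one estimates a single time slice pointwise in $x$ rather than integrating in $t$. Writing $u_0(\kappa_n(y))=\sum_{j=0}^{n-1}u_{0,j}\bar e_j(\kappa_n(y))$ and applying Parseval, the error reduces to a weighted combination of the amplification defect $|e^{-4\pi^2j^2t}-(R_{1,j}R_{2,j})^{[t/\tau]}|^2$ and the eigenvalue defect, each multiplied by $|u_{0,j}|^2$ (plus aliasing). Now the decay $e^{c\lambda^n_j[t/\tau]\tau}$ is available for only one factor, so I would balance the consistency power against it through the free exponent $\alpha\in(\tfrac12,1)$, trading $(\tau\lambda^n_j)^{2\alpha}$ (respectively $(j^2/n)^{2\alpha}$) against $([\tfrac t\tau]\tau)^{-\alpha}$ (respectively $t^{-\alpha}$); this produces precisely the two stated contributions $\tau^{\alpha-1/2}([\tfrac t\tau]\tau)^{-\alpha}$ and $n^{1-2\alpha}t^{-\alpha}$, and the various admissible ranges of $\theta$ (and the extra hypothesis $u_0\in H^1$ for $\theta\in(\tfrac12,1)$ without the step-size coupling) are exactly the regimes in which this balancing has enough smoothing to close.

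I expect the main obstacle to be the \emph{uniform} treatment of the temporal error across all frequencies and all $\theta\in[0,1]$. For low modes $\tau|\lambda^n_l|$ is small and one uses Taylor consistency of $r(z)$ for $e^z$, but for modes near $n$ one only has $\tau|\lambda^n_l|\le 4r=O(1)$, so the consistency expansion degenerates and one must instead rely on the stability estimate $|R_{1,l}R_{2,l}|\le1$ and on extracting decay via the fractional inequality $1-e^{-x}\le x^\gamma$. The delicate point is patching these two regimes and choosing the free exponents $\gamma,\alpha$ so that the resulting mode sums, which grow as negative powers of $\tau$ and $n$ through the coupling in Assumption \ref{Assumption3}, collapse to exactly $\sqrt\tau$ and $1/n$ in (i) and to the sharp decay rates in (ii), rather than to weaker powers or logarithmically corrected ones.
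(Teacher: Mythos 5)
Your overall architecture---spectral decomposition, separating the spatial discretization/aliasing error from the temporal symbol error, telescoping through $e^{\lambda^n_l t}$, and trading a fractional consistency exponent against heat decay---is essentially the paper's proof. But there is a genuine gap in part (i): your key quantitative step bounds the temporal defect by $(\tau|\lambda^n_l|)^{\gamma}e^{c\lambda^n_l t}$, sums $\tau^{2\gamma}\sum_l|\lambda^n_l|^{2\gamma-1}\approx\tau^{\frac12+2\epsilon}n^{4\epsilon}$ over \emph{all} principal modes $l\le n/2$, and closes the estimate only ``under the coupling $n^2\tau\le r$''. Assumption \ref{Assumption3} provides this coupling only for $\theta\in[0,\frac12]$; in case (\romannumeral3), $\theta\in(\frac12,1]$, the parameters $n$ and $\tau$ are completely unrelated, so $\tau^{\frac12+2\epsilon}n^{4\epsilon}$ is not $O(\sqrt\tau)$, and your description of the high-mode regime (``$\tau|\lambda^n_l|\le 4r=O(1)$'') is false there, since $\tau|\lambda^n_l|$ can be arbitrarily large. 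Your proposed fallback for those modes---stability $|R_{1,l}R_{2,l}|\le 1-\epsilon$ plus the fractional inequality---is also insufficient: per mode, the time integral of $(1-\epsilon)^{2[t/\tau]}$ is $O(\tau)$, and there can be of order $n/2$ such modes, giving $O(n\tau)$, which is not bounded by $C(\sqrt\tau+\frac1n)$ once $n^2\tau$ is unbounded.

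The missing ingredient is to exploit the decay of the implicit factor itself rather than mere stability: for $\theta>0$ one has $|R_{1,j}|=(1+\theta\tau|\lambda^n_j|)^{-1}$, hence $\sum_{j}|R_{1,j}|^2\le\sum_{j}(1+16\theta j^2\tau)^{-2}\le C(\theta\tau)^{-\frac12}$, which combined with $\int_0^{\infty}(1-\epsilon)^{2[\frac{t}{\tau}]}dt\le C\tau$ yields $C\sqrt\tau$ with no coupling at all (this is the paper's term $\mathcal J_6$; the coupling is invoked only for $\theta=0$, where Assumption \ref{Assumption3} supplies it). Correspondingly, the consistency expansion must be restricted to the genuinely low modes $A_1=\{j:R_{1,j}R_{2,j}\ge\frac12\}\subset\{j\lesssim\tau^{-1/2}\}$, whose cardinality is $O(\tau^{-1/2})$ independently of $n$, so the low-mode sum is $\tau^{-2\epsilon}$ rather than $n^{4\epsilon}$ and your balancing closes for every $\theta$. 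A second, lesser, issue concerns part (ii) for $\theta\in(\frac12,1)$: Parseval does not give a clean mode-by-mode product pointwise in $x$; one must pay Cauchy--Schwarz, i.e.\ pair the symbol error in $H^{-1}$ against $\|u_0\|_{H^1}$ (this is precisely where the hypothesis $u_0\in H^1$ enters in the paper's terms $Q_1,Q_2$), because otherwise the high-mode sum $\sum_{j\in A_2}|e^{-4\pi^2j^2t}-(R_{1,j}R_{2,j})^{[t/\tau]}|^2$ contains up to $n/2$ terms of size $O(1)$ at fixed $[\frac{t}{\tau}]$ and diverges as $n\to\infty$ without the $j^{-2}$ weight that this duality provides.
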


With this lemma in hand, we present    the convergence order of the fully  discrete scheme.

\begin{theorem}\label{sec_finite}
Let $m_{\lambda}(2)<\infty$ 
and   
 conditions in Lemma \ref{lemma4.8} hold. Then for each $t\in(0,T]$, there is a constant $C:=C(t)>0$ such that
\begin{align}\label{eqthm4.9}
\sup_{x\in[0,1]}\|u^{n,\tau}(t,x)-u(t,x)\|_{2}\leq C(\tau^{\frac14-}+(\frac1n)^{\frac12-}).
\end{align}
\end{theorem}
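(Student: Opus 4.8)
The plan is to compare the mild form of the numerical solution \eqref{mild full} with the mild form of the exact solution, and to split the resulting error into an initial-value part and a stochastic-integral part. For $t=t_i$ a grid point, write
\begin{align*}
u^{n,\tau}(t,x)-u(t,x)
&=\Big(\int_0^1 G^{n,\tau}_1(t,x,y)u_0(\kappa_n(y))\,dy-\int_0^1 G(t,x,y)u_0(y)\,dy\Big)\\
&\quad+\Big(\int_0^t\!\!\int_0^1 G^{n,\tau}_2(t-\kappa_\tau(s)-\tau,x,y)\sigma(u^{n,\tau}(\kappa_\tau(s),\kappa_n(y)))\,\Lambda(ds,dy)\\
&\qquad\quad-\int_0^t\!\!\int_0^1 G(t-s,x,y)\sigma(u(s,y))\,\Lambda(ds,dy)\Big)
=:\mathcal E_1(t,x)+\mathcal E_2(t,x).
\end{align*}
For $\mathcal E_1$, the deterministic estimate \eqref{green1} of Lemma \ref{lemma4.8}(\romannumeral2) controls the difference of Green-function kernels applied to $u_0$ (after also handling the harmless replacement of $u_0(y)$ by $u_0(\kappa_n(y))$ using boundedness of $u_0$); optimizing the exponent $\alpha\in(\frac12,1)$ in $\tau^{\alpha-\frac12}(\lfloor t/\tau\rfloor\tau)^{-\alpha}+n^{1-2\alpha}t^{-\alpha}$ yields a contribution of order $\tau^{\frac14-}+n^{-(\frac12-)}$, consistent with \eqref{eqthm4.9}.

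For the stochastic term $\mathcal E_2$, I would further split it into three pieces: first the \emph{kernel error} where the diffusion coefficient is frozen at the exact solution, $\int_0^t\!\int_0^1\big(G^{n,\tau}_2(t-\kappa_\tau(s)-\tau,x,y)-G(t-s,x,y)\big)\sigma(u(s,y))\,\Lambda(ds,dy)$; second a \emph{spatial/temporal sampling error} from replacing $\sigma(u(s,y))$ by $\sigma(u(\kappa_\tau(s),\kappa_n(y)))$; and third the genuine \emph{numerical error} $\int_0^t\!\int_0^1 G^{n,\tau}_2(\cdots)\big(\sigma(u^{n,\tau}(\kappa_\tau(s),\kappa_n(y)))-\sigma(u(\kappa_\tau(s),\kappa_n(y)))\big)\,\Lambda(ds,dy)$. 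To each I apply the It\^o isometry for the martingale part of the L\'evy integral together with $m_\lambda(2)<\infty$, reducing the $L^2(\Omega)$ norm to an $L^2(dy\,ds)$ norm of the integrand. The first piece is then bounded using the integrated kernel estimate \eqref{estimate}, which gives exactly $(\frac1n+\sqrt\tau)^{1/2}\sim \tau^{\frac14}+n^{-\frac12}$ after the square root, and boundedness of $\sigma$. The second piece uses the Lipschitz property of $\sigma$ together with the H\"older regularity in space and time of the exact solution (regularity in $H^r$ and the moment bounds from Propositions \ref{stop_solution}--\ref{path_pro}) to produce the $\tau^{\frac14-}+n^{-(\frac12-)}$ rate. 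The third piece is the self-referential term: after the isometry and Lipschitz bound it is dominated by $C\int_0^t \sup_{x}\|u^{n,\tau}(\kappa_\tau(s),x)-u(\kappa_\tau(s),x)\|_2^2\,\big(1+\tfrac{1}{\sqrt{t-\kappa_\tau(s)-\tau}}\big)\,ds$, using the $L^2$-bound $\int_0^1|G^{n,\tau}_2(s,x,y)|^2\,dy\le C(1+s^{-1/2})$ from \cite[Lemma 4.1]{dang2022} (the $p=2$ case of Lemma \ref{propG}).

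The closing step is a singular Gr\"onwall argument: defining $\phi(t):=\sup_x\|u^{n,\tau}(t,x)-u(t,x)\|_2^2$, the above yields $\phi(t)\le C(\tau^{\frac12-}+n^{-(1-)})+C\int_0^t \phi(\kappa_\tau(s))\,(1+(t-\kappa_\tau(s)-\tau)^{-1/2})\,ds$, and the weakly singular kernel $(t-s)^{-1/2}$ is integrable, so the generalized (fractional) Gr\"onwall inequality closes the estimate to give $\phi(t)\le C(t)(\tau^{\frac12-}+n^{-(1-)})$; taking square roots produces \eqref{eqthm4.9}. The main obstacle I anticipate is the kernel-error piece: establishing \eqref{estimate} (Lemma \ref{lemma4.8}(\romannumeral1)) requires delicate spectral comparison between the continuous eigenvalues $-4\pi^2 m^2$ and the discrete ones $\lambda^n_l=-4n^2\sin^2(l\pi/n)$, and careful treatment of the temporal shift $t-\kappa_\tau(s)-\tau$ near the singularity at $s\to t$; fortunately this is deferred to Section \ref{sec5}, so within this proof the real care lies in arranging the three-way split so that each term matches the half-order-in-space, quarter-order-in-time budget and in verifying integrability of the singular kernel so the Gr\"onwall step goes through uniformly in $n,\tau$.
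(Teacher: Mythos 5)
Your architecture is essentially the paper's (mild-form comparison, initial-value part via \eqref{green1}, It\^o isometry with $m_\lambda(2)<\infty$, Lipschitz plus singular Gr\"onwall), but there is a concrete gap in your kernel-error piece. That piece contains the difference $G^{n,\tau}_2(t-\kappa_\tau(s)-\tau,x,y)-G(t-s,x,y)$, whose two time arguments do \emph{not} match, whereas Lemma \ref{lemma4.8}(\romannumeral1) estimates $\int_0^\infty\int_0^1|G(r,x,y)-G^{n,\tau}_2(r,x,y)|^2\,dy\,dr$ with both kernels at the \emph{same} time $r$. To invoke \eqref{estimate} you must insert the intermediate term $G^{n,\tau}_2(t-s,x,y)$ and then prove, in addition, a time-shift stability estimate for the discrete kernel,
\begin{align*}
\int_0^{t}\int_0^1\big|G^{n,\tau}_2(t-s,x,y)-G^{n,\tau}_2(t-\kappa_\tau(s)-\tau,x,y)\big|^2\,dy\,ds\leq C\sqrt{\tau},
\end{align*}
which is precisely \eqref{show1} in the paper's proof and is not a corollary of \eqref{estimate}: it requires the spectral splitting $\{1,\ldots,[\frac n2]\}=A_1\cup A_2$ and bounds of the type \eqref{mathcal L0}--\eqref{mathcal L1}. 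Your closing remark that the "temporal shift $t-\kappa_\tau(s)-\tau$ near the singularity" is handled in Section \ref{sec5} misattributes where this difficulty lives: it is resolved inside the proof of the theorem itself, not in the proof of Lemma \ref{lemma4.8}. As written, your first piece is therefore unproven.

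Two further points need repair. First, you bound the kernel-error piece using "boundedness of $\sigma$", but the theorem does not assume $\sigma$ bounded (only globally Lipschitz); you should use linear growth of $\sigma$ together with the moment bound of Proposition \ref{stop_solution}. Second, for your sampling piece you cite the "H\"older regularity \ldots (regularity in $H^r$ \ldots from Propositions \ref{stop_solution}--\ref{path_pro})", but Proposition \ref{path_pro} concerns oscillations in $H^r$ with $r<-\frac12$ and gives no pointwise $L^2(\Omega)$ increment bounds; the needed estimates, e.g.\ $\sup_x\|u(t,x)-u(\kappa_\tau(t),x)\|_2\leq C(\tau^{\frac{\alpha_0}{2}-\frac14}\kappa_\tau(t)^{-\frac{\alpha_0}{2}}\mathbf 1_{\{t\geq\tau\}}+\tau^{\frac14-})$ as in \eqref{time_H1} and its spatial analogue, must be derived from the mild form and heat-kernel computations (Step 1 of the paper's proof). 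These bounds carry singular prefactors in $\kappa_\tau(s)$ and $s$, so one must also verify their integrability against the kernel $1+(t-s)^{-1/2}$ from Lemma \ref{propG} ($p=2$) before the Gr\"onwall step closes; this verification is routine but cannot be skipped.
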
 
\begin{proof}
We split the proof into two steps. 

\textit{Step 1. Estimate of error $\sup\limits_{x\in[0,1]}\|u(t,x)-u(\kappa_{\tau}(t),\kappa_n(x))\|_{2}.$} By the mild form of the exact solution, we derive  
\begin{align*}
&\bbE[|u(t,x)-u(\kappa_{\tau}(t),x)|^2]\leq C\int_0^1|G(t,x,y)-G(\kappa_{\tau}(t),x,y)|^2|u_0(y)|^2dy\\
&+C\int_{\kappa_{\tau}(t)}^t\int_0^1|G(t-s,x,y)|^2\bbE[|u(s,y)|^2]dsdy\\
&+C\int_0^{\kappa_{\tau}(t)}\int_0^1|G(t-s,x,y)-G(\kappa_{\tau}(t)-s,x,y)|^2\bbE[|u(s,y)|^2]dsdy.
\end{align*}
Let $t\in[t_i,t_{i+1}),i\ge 1.$ Notice that 
\begin{align*}
&\quad \int_0^{t_i}\int_0^1|G(t-s,x,y)-G(t_i-s,x,y)|^2dyds\notag\\
&=\int_0^{t_i}2\sum_{j=1}^{\infty}e^{-8\pi^2j^2(t_i-s)}(e^{-4\pi^2j^2(t-t_i)}-1)^2ds\leq \sum_{j=1}^{\infty}\frac{C}{j^2}(j^4(t-t_i)^2\wedge 1)\leq C|t-t_i|^{\frac12}, 
\end{align*}
and that for $\alpha_0\in(\frac12,2),$
\begin{align*}
&\int_0^1|G(t,x,y)-G(t_i,x,y)|^2dy\leq C
\sum_{j=1}^{\infty}e^{-8\pi^2j^2t_i}(e^{-4\pi^2j^2(t-t_i)}-1)^2\leq C\tau^{\alpha_0-\frac12}t_i^{-\alpha_0}. 
\end{align*}
These, combining  \cite[Lemma 2.1]{dang2022} lead to $$
\sup_{x\in[0,1]}\|u(t,x)-u(\kappa_{\tau}(t),x)\|_2\leq C(\tau^{\frac{\alpha_0}{2}-\frac14}(\kappa_{\tau}(t))^{-\frac{\alpha_0}{2}}+\tau^{\frac14}),\quad  t\ge \tau.
$$
For $t\in(0,\tau),$ it is sufficient  to estimate  the integral with the initial value $u_0(x)=\sum_{k=0}^{\infty}u_{0,k}e_k(x).$ By the Sobolev embedding theorem, we obtain   \begin{align*}
&\quad \sup_{x\in[0,1]}\Big|\int_0^1G(t,x,y)u_0(y)dy-u_0(x)\Big|^2\leq \Big\|\sum_{k=0}^{\infty}(1-e^{-4\pi^2k^2t})u_{0,k}e_k\Big\|^2_{H^{\frac12+}}\\
&\leq \sum_{k=0}^{\infty}(1+4\pi^2k^2)^{\frac12+}(1-e^{-4\pi^2k^2t})^2|u_{0,k}|^2\leq Ct^{\frac12-}\|u_0\|^2_{H^1},
\end{align*}
where we use $1-e^{-x}\leq x^{\alpha},\alpha\in(0,1),x>0.$
Hence, we have that for $\alpha_0\in(\frac12,2)$,
\begin{align}\label{time_H1}
\sup_{x\in[0,1]}\|u(t,x)-u(\kappa_{\tau}(t),x)\|_2\leq C(\tau^{\frac{\alpha_0}{2}-\frac14}(\kappa_{\tau}(t))^{-\frac{\alpha_0}{2}}\mathbf 1_{\{t\ge \tau\}}+\tau^{\frac14-}),\quad t>0.
\end{align} 
The $L^2(\Omega)$-error in  space is estimated as
\begin{align*}
&\bbE[|u(t,x)-u(t,\kappa_n(x))|^2]\leq C\int_0^1|G(t,x,y)-G(t,\kappa_n(x),y)|^2|u_0(y)|^2dy\\
&+C\int_0^{t}\int_0^1|G(t-s,x,y)-G(t-s,\kappa_n(x),y)|^2\bbE[|u(s,y)|^2]dsdy.
\end{align*}
It follows from the definition of  Green function $G$  that 
\begin{align*}
&\quad \int_0^t\int_0^1|G(t-s,x,y)-G(t-s,\kappa_n(x),y)|^2dsdy\\
&=2\sum_{j=1}^{\infty}\int_0^te^{-8\pi^2j^2(t-s)}ds|e^{2\pi\mathbf ij(x-\kappa_{n}(x))}-1|^2\leq \sum_{j=1}^{\infty}\frac{C}{j^2}(1\wedge j^2(x-\kappa_n(x))^2)\leq |\frac Cn|^{1-},
\end{align*}
and that for any $\delta\ll1,$
\begin{align*}
&\quad \int_0^1|G(t,x,y)-G(t,\kappa_n(x),y)|^2dy\\
&\leq C\sum_{j=1}^{\infty}e^{-8\pi^2j^2(t-s)}(1\wedge j^2(x-\kappa_n(x))^2)\leq C(\frac{1}{t})^{1-\delta}(\frac1n)^{1-\delta}.
\end{align*}
Hence, we obtain  $\sup\limits_{x\in[0,1]}\|u(t,x)-u(t,\kappa_n(x))\|_2\leq C\big((\frac{1}{t})^{\frac{1-\delta}{2}}+1\big)(\frac1n)^{\frac12-\delta}.$

\textit{Step 2. Estimate of error $\sup\limits_{x_j\in[0,1]}\|u(t_i,x_j)-u^{n,\tau}(t_i,x_j)\|_2,t_i\ge \tau$.}   
From the expression of $u$ and $u^{n,\tau},$ we have
that for $\alpha\in(\frac12,2),$
\begin{align*}
&\quad \mathbb E[|u(t_i,x_j)-u^{n,\tau}(t_i,x_j)|^2]
\leq C\tau^{\alpha-\frac12}t_i^{-\alpha}	\\&+C\int_0^{t_i}\int_0^1(G(t_i-s,x_j,y)-G^{n,\tau}_2(t_i-s,x_j,y))^2dsdy\sup_{s\in[0,t_i]}\sup_{y\in[0,1]}\mathbb E[|u(s,\kappa_n(y))|^2]\\
&+C\int_0^{t_i}\int_0^1(G^{n,\tau}_2(t_i-s,x_j,y))^2\sup_{y\in[0,1]}\mathbb E[|u(s,\kappa_n(y))-u(\kappa_{\tau}(s),\kappa_n(y))|^2]dsdy\\
&+C\int_0^{t_i}\int_0^1(G^{n,\tau}_2(t_i-s,x_j,y)-G^{n,\tau}_2(t_i-\kappa_{\tau}(s)-\tau,x_j,y))^2dsdy\times\\
&\sup_{s\in[0,t_i]}\sup_{y\in[0,1]}\mathbb E[|u(\kappa_{\tau}(s),\kappa_n(y))|^2]+C\int_0^{t_i}\int_0^1(G^{n,\tau}_2(t_i-\kappa_{\tau}(s)-\tau,x_j,y))^2\times\\
&\;\,\mathbb E[|u(\kappa_{\tau}(s),\kappa_n(y))-u^{n,\tau}(\kappa_{\tau}(s),\kappa_n(y))|^2]dsdy.
\end{align*}
To proceed, we need to show   
 \begin{align}\label{show1}
 \int_0^{t_i}\int_0^1(G^{n,\tau}_2(t_i-s,x,y)-G^{n,\tau}_2(t_i-\kappa_{\tau}(s)-\tau,x,y))^2dyds\leq C\sqrt \tau. 
 \end{align}
In fact, similar to \eqref{mathcal L0} and \eqref{mathcal L1}, we have the  estimate 
 \begin{align*}
 &\int_0^{t_i}\sum_{j\in  A_1}(R_{1,j}R_{2,j})^{2[\frac{t_i-\kappa_{\tau}(s)-\tau}{\tau}]}((R_{1,j}R_{2,j})^{[\frac{t_i-s}{\tau}]-[\frac{t_i-\kappa_{\tau}(s)-\tau}{\tau}]}-1)^2R_{1,j}^2ds\\
  &\leq \sum_{j\in  A_1}\frac{R_{1,j}^2\tau}{1-(R_{1,j}R_{2,j})^2}\Big|\big([\frac{t_i-s}{\tau}]-[\frac{t_i-\kappa_{\tau}(s)-\tau}{\tau}]\big)\lambda^n_j\tau\Big|
\leq C\sqrt{\tau},
 \end{align*}
  and the estimate 
 \begin{align*}
 &\int_0^{t_i}\sum_{j\in A_2}(R_{1,j}R_{2,j})^{2[\frac{t_i-\kappa_{\tau}(s)-\tau}{\tau}]}((R_{1,j}R_{2,j})^{[\frac{t_i-s}{\tau}]-[\frac{t_i-\kappa_{\tau}(s)-\tau}{\tau}]}-1)^2R_{1,j}^2ds\\
 &\leq C\sum_{j\in A_2}\frac{R_{1,j}^2\tau}{1-(1-\epsilon)^2} \leq C\sum_{j\in A_2}(1+16\theta j^2\tau)^{-2}\tau\leq C\sqrt\tau, 
 \end{align*}
 which imply \eqref{show1}.

Then taking $\alpha_0=1-\delta$ with $\delta$ being small in \eqref{time_H1}, and
 applying   Proposition \ref{stop_solution} and Lemma \ref{lemma4.8}, we derive
\begin{align*}
&\sup_{x_j\in[0,1]}\mathbb E[|u(t_i,x_j)-u^{n,\tau}(t_i,x_j)|^2]\leq C\tau^{\alpha-\frac12}t_i^{-\alpha}+C\sqrt{\tau}\\
&+C\tau^{\frac{1-\delta}{2}}\int_0^{t_i}(1+\frac{1}{\sqrt{t_i-s}})(1+\kappa_{\tau}(s)^{-1+2\epsilon}\mathbf 1_{\{s\ge \tau\}})ds\\
&+C\int_0^{t_i}(1+\frac{1}{\sqrt{t_i-\kappa_{\tau}(s)}})\sup_{y\in[0,1]}\mathbb E[|u(\kappa_{\tau}(s),\kappa_n(y))-u^{n,\tau}(\kappa_{\tau}(s),\kappa_n(y))|^2]ds,
\end{align*}
which together with the Gr\"onwall inequality finishes the proof. 
\end{proof}

At the end of this section, we give some discussions on Assumption \ref{assum_noise} of the noise.  
When $m_{\lambda}(p)$ is not necessarily finite for some $p\ge 1$, the exact solution is still well-posed in the sense that  $\sup_{x\in[0,1]}\bbE[|u(t,x)|^p\mathbf 1_{\{t\leq \tilde{\tau}_N\}}]<\infty$ for $p\in[1,3)$ and $N\in\bbN_+,$   where the stopping time is defined as 
$
\tilde{\tau}_N:=\inf\{t\in[0,T]:\mu([0,t]\times[0,1]\times[-N,N]^c)> 0\},\; N\in\mathbb N_+.
$
In this setting, we can 
introduce a noise truncation skill to obtain a convergent numerical method. Precisely, we truncate the noise \eqref{Lambda} as
\begin{align}\label{trun_noise}
\Lambda_N(dt,dx):&=bdtdx+\int_{|z|\leq 1}z\tilde {\mu}(dt,dx,dz)+\int_{1<|z|\leq N}z\mu(dt,dx,dz)\notag\\
&=\tilde bdtdx+\int_{|z|\leq N}z\tilde {\mu}(dt,dx,dz),
\end{align}
where $\tilde b=b+\int_{1<|z|\leq N}z\nu(dt,dx,dz).$ Denote by $u^{n,\tau}_N(t,x) $ the numerical   solution of the fully discrete scheme \eqref{fullscheme} with the truncated noise \eqref{trun_noise}.  
Then for each fixed $N,$ Assumption \ref{assum_noise} is satisfied, 
and thus the truncated numerical solution $u^{n,\tau}_N$ possesses     properties concerned in this paper.  
Below we present  the almost sure convergence of the truncated numerical solution.  

\begin{corollary}\label{semi_con}
Let 
conditions in Lemma \ref{lemma4.8}  hold, and let  
$n=n_m,\tau=\tau_m$ be sequences of integers such that  $n_m\ge m^{1+\delta},\tau_m\ge m^{-2(1+\delta)}$ for all $m\in\mathbb N_+$ and for  some $\delta>0$. Then for each $(t,x)\in(0,T]\times[0,1],$  $$\lim_{N\to\infty}\lim_{m\to\infty}|u^{n_m,\tau_m}_N(t,x)-u(t,x)|=0\quad a.s.
$$ 
\end{corollary}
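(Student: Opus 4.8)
The plan is to route the error through the auxiliary \emph{exact} solution $u_N$ of \eqref{she} driven by the truncated noise $\Lambda_N$, and to treat the discretization error and the truncation error by entirely different mechanisms, exploiting the nested order of the two limits. I would write
$$
|u^{n_m,\tau_m}_N(t,x)-u(t,x)|\leq |u^{n_m,\tau_m}_N(t,x)-u_N(t,x)|+|u_N(t,x)-u(t,x)|.
$$
For the inner limit $m\to\infty$ (with $N$ fixed) the second term plays no role, so it suffices to show that the first term tends to $0$ almost surely; for the outer limit $N\to\infty$ the first term has already vanished and one only has to control $u_N(t,x)-u(t,x)$. The point of this order is that the constant $C(N)$ produced by the discretization estimate, which blows up as $N\to\infty$, never interacts with the outer limit.

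\textbf{Inner limit.} For each fixed $N$ the truncated Lévy measure $\lambda_N:=\lambda|_{[-N,N]}$ satisfies $m_{\lambda_N}(p)<\infty$ for every $p$ (indeed $\int_{1<|z|\leq N}|z|^2\lambda(dz)\leq N^2\lambda(\{|z|>1\})<\infty$), so Assumption \ref{assum_noise} holds for $\Lambda_N$ and Theorem \ref{sec_finite} applies to the truncated equation, yielding
$$
\sup_{x\in[0,1]}\|u^{n_m,\tau_m}_N(t,x)-u_N(t,x)\|_2\leq C(N,t)\big(\tau_m^{\frac14-}+(\tfrac{1}{n_m})^{\frac12-}\big).
$$
By Markov's inequality, for any $\varepsilon>0$,
$$
\mathbb P\big(|u^{n_m,\tau_m}_N(t,x)-u_N(t,x)|>\varepsilon\big)\leq \varepsilon^{-2}C(N,t)^2\big(\tau_m^{\frac12-}+(\tfrac{1}{n_m})^{1-}\big).
$$
The prescribed scaling of $n_m$ and $\tau_m$ (together with the coupling in Assumption \ref{Assumption3}) makes both $\tau_m^{\frac12-}$ and $(\tfrac{1}{n_m})^{1-}$ summable in $m$, so the Borel--Cantelli lemma gives $u^{n_m,\tau_m}_N(t,x)\to u_N(t,x)$ almost surely. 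Intersecting the corresponding full-measure events over the countable family $N\in\mathbb{N}_+$, I obtain a single event $\Omega_0$ of full probability on which this convergence holds simultaneously for every $N$.

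\textbf{Outer limit.} Here I would exploit the stopping time $\tilde\tau_N$. On $\{t<\tilde\tau_N\}$ the Poisson measure $\mu$ has no atom in $[0,t]\times[0,1]\times\{|z|>N\}$, hence $\Lambda-\Lambda_N=\int_{|z|>N}z\,\mu$ vanishes as a random measure on $[0,t]\times[0,1]$; that is, $\Lambda$ and $\Lambda_N$ coincide pathwise there, so a localization argument gives $u_N(s,\cdot)=u(s,\cdot)$ for all $s\leq t$ on $\{t<\tilde\tau_N\}$, in particular $u_N(t,x)=u(t,x)$. Since $\lambda(\{|z|>1\})<\infty$, the atoms of $\mu$ in the compact window $[0,T]\times[0,1]\times\{|z|>1\}$ are almost surely finite in number, so their largest modulus $Z^\ast(\omega)$ is finite and $\tilde\tau_N(\omega)>T\geq t$ whenever $N>Z^\ast(\omega)$; equivalently, almost surely $t<\tilde\tau_N$ for all large $N$. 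Thus on a full-measure event $\Omega_1$ one has $u_N(t,x)=u(t,x)$ for all sufficiently large $N$, and on $\Omega_0\cap\Omega_1$ this combines with the inner limit to give $\lim_{N\to\infty}\lim_{m\to\infty}u^{n_m,\tau_m}_N(t,x)=u(t,x)$ almost surely.

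\textbf{Main obstacle.} The delicate point is the pathwise identification $u_N=u$ on $\{t<\tilde\tau_N\}$: although the driving noises agree there, the two solutions are defined through stochastic integrals rather than pathwise, so this needs a genuine localization, namely stopping the mild equation at $\tilde\tau_N$, observing that both $u$ and $u_N$ satisfy the same stopped fixed-point equation driven by the common noise, and invoking pathwise uniqueness via the Lipschitz bound on $\sigma$ together with a Gr\"onwall estimate for $\mathbb E[\sup_{s\leq t}|u(s,\cdot)-u_N(s,\cdot)|^2\mathbf 1_{\{s<\tilde\tau_N\}}]$. The nested order of the limits is exactly what makes the scheme work: it decouples the $m$-asymptotics, where finite moments and Theorem \ref{sec_finite} are available, from the $N$-asymptotics, which are handled purely by the almost sure finiteness of large jumps, so the unbounded constant $C(N)$ is never sent to infinity.
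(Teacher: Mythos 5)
Your proposal is correct and follows essentially the same route as the paper's proof: the triangle inequality through the truncated exact solution $u_N$, Theorem \ref{sec_finite} plus Chebyshev and Borel--Cantelli for the inner limit in $m$, and the stopping times $\tilde{\tau}_N$ together with the almost sure finiteness of the jumps exceeding $1$ in $[0,T]\times[0,1]$ for the outer limit in $N$. The only slip is your claim that $m_{\lambda_N}(p)<\infty$ for \emph{every} $p$ (truncating large jumps does not improve the integrability of small jumps, so this can fail for $p\in[1,2)$), but it is inconsequential here since Theorem \ref{sec_finite} only requires $m_{\lambda_N}(2)<\infty$, which your computation does establish.
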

\begin{proof}
We denote by $u_N(t,x)$ the solution of \eqref{she} with the  truncated noise $\Lambda_N$. 
For the numerical solution with the truncated noise, according to Theorem    \ref{sec_finite}, we have that for each $t\in(0,T],$ $\sup_{x\in[0,1]}\|u^{n_m,\tau_m}_N(t,x)-u_N(t,x)\|_2\leq C(\tau^{\frac14-}+(\frac1n)^{\frac12-}).$ 
Hence, the Chebyshev inequality gives that for each $(t,x)\in(0,T]\times[0,1],$
\begin{align*}
\mathbb P(|u^{n_m,\tau_m}_N(t,x)-u_N(t,x)|>m^{-\gamma})
\leq Cm^{2\gamma}(\tau_m^{\frac12-}+(\frac{1}{n_m})^{1-}).
\end{align*}
By the assumption $n_m\ge m^{1+\delta},\tau_m\ge m^{-2(1+\delta)}$ and the Borell--Cantelli lemma, we have $\lim\limits_{m\to\infty}|u^{n_m,\tau_m}_N(t,x)-u_N(t,x)|=0$ a.s. 

Next, we show that $\lim\limits_{N\to\infty}|u_N(t,x)-u(t,x)|=0$ a.s. for each $(t,x)\in(0,T]\times[0,1].$ 
From the definition of the stopping time $\tilde\tau_N,$ we have  $\Lambda=\Lambda_N$ and $u(t,x)=u_N(t,x)$ on the event $\{T\leq \tilde{\tau}_N\}.$ Noticing  that there is only a finite number of jumps larger than $N$ in the bounded closed set $[0,T]\times[0,1]$, we deduce that the non-decreasing sequence of stopping times $\{\tilde{\tau}_N\}_{N\ge 1}$ satisfies $\tilde{\tau}_N=\infty$ for large values of $N.$ This implies that,  for sufficiently large $N,$ $u_N(t,x)=u(t,x),$ a.s. for $(t,x)\in(0,T]\times[0,1].$ Thus we finish the proof. 
\end{proof}

\section{Estimates of discrete Green functions}\label{sec5}
In this section, we give the  proof of estimates of discrete Green functions in Lemma \ref{lemma4.8}.

\begin{proof}[Proof of Lemma \ref{lemma4.8}]
$(\romannumeral1)$  By expanding the real and imaginary parts, the fully discrete Green functions can be written as follows:
\begin{align*}
G^{n,\tau}_1(t,x,y)
&=1+2\tilde{\sum}_{l}(R_{1,l}R_{2,l})^{[\frac{t}{\tau}]}\big(\varphi_{c,l}(\kappa_n(x))\varphi_{c,l}(\kappa_n(y))\\
&\quad +\varphi_{s,l}(\kappa_n(x))\varphi_{s,l}(\kappa_n(y))\big)+(R_{1,\frac{n}{2}}R_{2,\frac{n}{2}})^{[\frac{t}{\tau}]}g_n(x,y),\\
G^{n,\tau}_2(t,x,y)&=1+2\tilde{\sum}_{l}(R_{1,l}R_{2,l})^{[\frac{t}{\tau}]}R_{1,l}\big(\varphi_{c,l}(\kappa_n(x))\varphi_{c,l}(\kappa_n(y))\\
&\quad +\varphi_{s,l}(\kappa_n(x))\varphi_{s,l}(\kappa_n(y))\big)+(R_{1,\frac{n}{2}}R_{2,\frac{n}{2}})^{[\frac{t}{\tau}]}R_{1,\frac{n}{2}}g_n(x,y),
\end{align*}
where  we use the notation $\tilde{\sum}_{l}$ to denote $\sum_{l=1}^{[\frac n2]}$ when $n$ is odd and to denote $\sum_{l=1}^{\frac n2-1}$ when $n$ is even. Here, $\varphi_{c,l}(x):=\cos(2\pi lx),\varphi_{s,l}(x):=\sin(2\pi lx)$, 
  $
g_n(x,y)=0$ when  $n$ is odd, and $g_n(x,y)=\varphi_{c,\frac{n}{2}}(\kappa_n(x))\varphi_{c,\frac{n}{2}}(\kappa_n(y))$ when $n$ is even. 
Rewrite the spectral decomposition of $G(t,x,y)$ 
as 
$
G(t,x,y)=1+2\sum_{l=1}^{\infty}e^{-4\pi^2l^2t}(\varphi_{c,l}(x)\varphi_{c,l}(y)+\varphi_{s,l}(x)\varphi_{s,l}(y)).
$

We first show the result by supposing $n$ is odd. It is clear that
\begin{align}\label{split_I1}
I:=\int_{0}^{\infty}\int_{0}^{1}|G(t,x,y)-G^{n,\tau}_2(t,x,y)|^2\,dy\,dt\leq 8\sum_{k=1}^{4}(I^c_k+I^s_k),
\end{align}
where
\begin{align*}
&I^c_1:=\int_{0}^{\infty}\int_{0}^{1}\Big|2\sum_{r=[\frac{n}{2}]+1}^{\infty}e^{-4\pi^2r^2t}\varphi_{c,r}(x)\varphi_{c,r}(y)\Big|^2\,dy\,dt,\\
&I^c_2:=\int_{0}^{\infty}\int_{0}^{1}\Big|2\sum_{r=1}^{[\frac{n}{2}]}e^{-4\pi^2r^2t}(\varphi_{c,r}(x)-\varphi_{c,r}(\kappa_n(x)))\varphi_{c,r}(y)\Big|^2\,dy\,dt,\\
&I^c_3:=\int_{0}^{\infty}\int_{0}^{1}\Big|2\sum_{r=1}^{[\frac{n}{2}]}e^{-4\pi^2r^2t}\varphi_{c,r}(\kappa_n(x))(\varphi_{c,r}(y)-\varphi_{c,r}(\kappa_n(y)))\Big|^2\,dy\,dt,\\
&I^c_4:=\int_{0}^{\infty}\int_{0}^{1}\Big|2\sum_{r=1}^{[\frac{n}{2}]}(e^{-4\pi^2r^2t}-(R_{1,r}R_{2,r})^{[\frac{t}{\tau}]}R_{1,r})\varphi_{c,r}(\kappa_n(x))\varphi_{c,r}(\kappa_n(y))\Big|^2\,dy\,dt.
\end{align*} Terms  $I^s_k,k=1,2,3,4$ are defined in a similar way via replacing $\cos(\cdot)$ by $\sin(\cdot)$.

Terms $I^c_1,I^c_2$ can be estimated as follows: when $n\ge 3$,
\begin{align*}
I^c_1&=4\int_{0}^{\infty}\int_{0}^{1}\sum_{r=[\frac{n}{2}]+1}^{\infty}e^{-8\pi^2r^2t}\cos^2(2\pi rx)\cos^2(2\pi ry)\,dy\,dt\\
&\leq C\int_{0}^{\infty}\sum_{r=[\frac{n}{2}]+1}^{\infty}e^{-8\pi^2r^2t}\,dt
\leq \frac{C}{n},
\end{align*} and 
$
I^c_2
\leq C\int_{0}^{\infty}\sum_{r=1}^{[\frac{n}{2}]}e^{-8\pi^2 r^2 t}\times (\frac{2\pi r}{n})^2\,dt\leq \frac{C}{n}.
$ 
Define the notation $\tilde{G}_n(t,x,y):=1+2\sum_{r=1}^{[\frac{n}{2}]}e^{-4\pi^2r^2t}\varphi_{c,r}(\kappa_n(x))\varphi_{c,r}(y)$. 
Note that for every function $v\in \mathcal{C}^1([0,1])$,
\begin{align*}
&\int_{0}^{1}|v(y)-v(\kappa_n(y))|^2\,dy=\int_{0}^{1}\Big|\int_{\kappa_n(y)}^{y}v'(x)\,dx\Big|^2\,dy\leq \frac{1}{n}\int_{0}^{1}\int_{\kappa_n(y)}^{y}|v'(x)|^2\,dx\,dy\\
\leq&\; \frac{1}{n}\int_{0}^{1}\int_{x}^{x+\frac{1}{n}}|v'(x)|^2\,dy\,dx\leq \frac{1}{n^2}\int_{0}^{1}|v'(x)|^2\,dx.
\end{align*}
Thus,
\begin{align*}
I^c_3&=\int_{0}^{\infty}\int_{0}^{1}|\tilde{G}_n(t,x,y)-\tilde{G}_n(t,x,\kappa_n(y))|^2\,dy\,dt\leq \frac{1}{n^2}\int_{0}^{\infty}\int_{0}^{1}\Big|\frac{d}{dy}\tilde{G}_n(t,x,y)\Big|^2\,dy\,dt\\
&\leq \frac{C}{n^2}\int_{0}^{\infty}\int_{0}^{1}\sum_{r=1}^{[\frac{n}{2}]}e^{-8\pi^2r^2t}\times r^2\sin^2(2\pi ry)\,dy\,dt
\leq \frac{C}{n}. 
\end{align*}
Similarly, we can obtain  $I^s_k\leq \frac{C}{n},k=1,2,3$.

For the term $I^c_4,$ we have 
$
I^c_4\leq 2\int_0^{\infty}\sum_{r=1}^{[\frac n2]}|e^{-4\pi^2 r^2t}-(R_{1,r}R_{2,r})^{[\frac{t}{\tau}]}R_{1,r}|^2dt.
$
Recalling  \eqref{set_split}, we have the split  $\{j:1,2,\ldots,[\frac n2]\}=A_1\cup A_2$, and sets $A_1,A_2$ have the property stated  in \eqref{set_prop}. 
Hence,  we have the decomposition of the term $I^c_4,$
\begin{align}\label{split_I2}
&I^c_4
\leq C\sum_{j\in A_1}\int_0^{\infty}|e^{-4\pi^2j^2t}-e^{\lambda^n_jt}|^2dt+C
\sum_{j\in A_1}\int_{0}^{\infty}\Big|e^{\lambda^n_j t}-\exp\Big\{-R_{3,j}\frac{t}{\tau}\Big\}\Big|^2\,dt\notag\\
&+C\sum_{j\in A_1}\int_{0}^{\infty}\Big|\exp\Big\{-R_{3,j}\frac{t}{\tau}\Big\}-\exp\Big\{-R_{3,j}\big[\frac{t}{\tau}\big]\Big\}\Big|^2\,dt\notag\\
&+C\sum_{j\in A_1}\int_{0}^{\infty}\Big|\exp\Big\{-R_{3,j}\big[\frac{t}{\tau}\big]\Big\}-\exp\Big\{-R_{3,j}\big[\frac{t}{\tau}\big]\Big\}R_{1,j}\Big|^2\,dt\notag\\
&+C\sum_{j\in A_1}\int_{0}^{\infty}\Big|\exp\Big\{-R_{3,j}\big[\frac{t}{\tau}\big]\Big\}R_{1,j}-(R_{1,j}R_{2,j})^{[\frac{t}{\tau}]}R_{1,j}\Big|^2\,dt\notag\\
&+C\sum_{j\in A_2}\int_0^{\infty}e^{-8\pi^2j^2t}dt+C\sum_{j\in A_2}\int_0^{\infty}|(R_{1,j}R_{2,j})^{[\frac{t}{\tau}]}R_{1,j}|^2dt
=:C\sum_{i=0}^6\mathcal J_i.
\end{align}
The term $\mathcal J_0$ is estimated as  
\begin{align*}
\mathcal J_0&\leq C\int_{0}^{\infty}\sum_{r=1}^{[\frac{n}{2}]}e^{-8\pi^2 r^2c^n_rt}(1-e^{-4\pi^2r^2(1-c^n_r)t})^2\,dt\\
&\leq C\int_{0}^{\infty}\sum_{r=1}^{[\frac{n}{2}]}e^{-Cr^2t} (4\pi^2r^2(1-c^n_r)t)^2\,dt\leq \frac{C}{n^4}\int_{0}^{\infty}\sum_{r=1}^{[\frac{n}{2}]}r^8t^2e^{-Cr^2t}dt
\leq \frac{C}{n},
\end{align*}
where $c^n_r:=\sin^2\frac{r\pi}{n}/(\frac{r\pi}{n})^2\in [\frac{4}{\pi^2},1]$ for $r=1,2,\ldots,[\frac{n}{2}]$. Here we use the inequalities $1-e^{-z}\leq z$ and $1-\frac{\sin^2 z}{z^2}\leq \frac{z^2}{3}$ for $z>0$. The term $\mathcal J_1$ is estimated as 
\begin{align*}
\mathcal J_1
=&\,\sum_{j\in A_1}\int_{0}^{\infty}e^{2\lambda^n_j t}\times \Big|1-\exp\Big\{-\frac{(1-\theta)(\tau \lambda^n_j)^2}{1+(1-\theta)\tau\lambda^n_j}\frac{t}{\tau}\Big\}\Big|^2\,dt\\
\leq&\,\sum_{1\leq j\leq \frac{1}{4}\sqrt{\frac{1}{(2-\theta)\tau}}}C\int_{0}^{\infty}e^{2\lambda^n_j t}\times (\frac{t}{\tau})^2(\lambda^n_j \tau)^4\,dt
\leq \sum_{1\leq j\leq \frac{1}{4}\sqrt{\frac{1}{(2-\theta)\tau}}}C\tau^2j^2\leq C\sqrt{\tau}. 
\end{align*}
For the term $\mathcal J_2,$ by applying the mean value theorem, we have  
\begin{align*}
\mathcal J_2\leq& \sum_{j\in A_1}\int_{0}^{\infty}\exp\Big\{-2R_{3,j}\big[\frac{t}{\tau}\big]\Big\}|R_{3,j}|^2\,dt
\leq \sum_{j\in A_1}C\int_{0}^{\infty}j^4\tau^2e^{-32j^2\tau [\frac{t}{\tau}]}\,dt
\leq C\sqrt{\tau}.
\end{align*}
For terms $\mathcal J_3$ and $\mathcal J_4$, we obtain  
\begin{align*}
\mathcal J_3\leq& \sum_{j\in A_1}\int_{0}^{\infty}\exp\Big\{-2R_{3,j}\big[\frac{t}{\tau}\big]\Big\}\Big|\frac{-\theta\tau\lambda^n_j}{1-\theta \tau \lambda^n_j}\Big|^2\,dt
\leq \sum_{j\in A_1}C\int_{0}^{\infty}j^4\tau^2e^{-32j^2t}\,dt
\leq C\sqrt{\tau},
\end{align*} and 
\begin{align*}
\mathcal J_4\leq &\sum_{j\in A_1}\int_{0}^{\infty}\exp\Big\{-2\big[\frac{t}{\tau}\big]\ln (1+R_{3,j})\Big\}\\
&\qquad\qquad\times\Big|1-\exp\Big\{\big[\frac{t}{\tau}\big](-R_{3,j}+\ln (1+R_{3,j}))\Big\}\Big|^2\Big(\frac{1}{1-\theta \tau \lambda^n_j}\Big)^2\,dt\\
\leq &\sum_{1\leq j\leq\frac{1}{4}\sqrt{\frac{1}{(2-\theta)\tau}}}\int_{0}^{\infty}\exp\Big\{2C_2\lambda^n_j \tau \big[\frac{t}{\tau}\big]\Big\}\times\Big|1-\exp\Big\{-\big[\frac{t}{\tau}\big]C_1(-2\lambda^n_j\tau)^2\Big\}\Big|^2\,dt\\
\leq &\sum_{1\leq j\leq\frac{1}{4}\sqrt{\frac{1}{(2-\theta)\tau}}}C\int_{0}^{\infty}e^{-32C_2j^2t}\times t^2j^8\tau^2\,dt\leq C\tau^2 \sum_{1\leq j\leq\frac{1}{4}\sqrt{\frac{1}{(2-\theta)\tau}}}j^2\leq C\sqrt{\tau},
\end{align*}
where we use the fact that $z:=-\lambda^n_j\tau\in (0,\frac{\pi^2}{4(2-\theta)}]$, $j=1,2,\ldots\,[\frac{n}{2}]$ because of $j^2\tau\leq \frac{1}{16(2-\theta)}$. For such bounded  $z$, we have $-C_1z^2\leq-z+\ln (1+z)\leq 0$ and $\ln (1+z)\ge C_2z$ for some $C_1,C_2>0$. 
For the term $\mathcal J_5$, we have 
$
\mathcal J_5\leq \sum_{\frac{1}{2\pi}\sqrt{\frac{1}{(2-\theta)\tau}}<j\leq [\frac{n}{2}]}\frac{C}{j^2}\leq C\sqrt{\tau}.
$
And the term $\mathcal J_6$ is estimated as 
\begin{align*}
&\mathcal J_6\leq C\sum_{j\in A_2}\int_{0}^{\infty}(1-\epsilon)^{2[\frac{t}{\tau}]}\times (1+16\theta j^2\tau)^{-2}\,dt\\
\leq  &\;\sqrt{\tau}\int_{\frac{1}{2\pi}\sqrt{\frac{1}{2-\theta}}}^{[\frac{n}{2}]\sqrt{\tau}}\int_0^{\infty}(1-\epsilon)^{2[r]}\times (1+16\theta y^2)^{-2}\,dr\,dy\leq C\sqrt{\tau},
\end{align*}
where we use the condition  $n^2\tau\leq C$ for $\theta=0.$

When $n$ is even, what we need to prove is the difference of the term of $j=\frac{n}{2}$ in the expansions of $G$ and $G^{n,\tau}_2$, i.e.,
\begin{align*}
&\int_{0}^{\infty}\int_{0}^{1}\Big|\Big(e^{-\pi^2n^2t}-(R_{1,\frac{n}{2}}R_{2,\frac{n}{2}})^{[\frac{t}{\tau}]}R_{1,\frac{n}{2}}\Big)g_n(x,y)\Big|^2\,dy\,dt\\
\leq&\; 2\int_{0}^{\infty}e^{-2\pi^2n^2t}\,dt+2\int_{0}^{\infty}(1-\epsilon)^{2[\frac{t}{\tau}]}(1+4\theta n^2\tau)^{-2}\,dt\leq\frac{C}{n^2}+C\tau.
\end{align*}
Hence the proof of $(\romannumeral1)$ is completed.

$(\romannumeral2)$
We suppose that $n$ is odd since the case of $n$ being even  can be proved similarly. We split the proof into two cases. 
  
\textit{Case $1:$ $\theta\in[0,\frac{1}{2}]$ or $\theta=1.$} In this case, we prove that for any $\alpha\in(\frac{1}{2},1)$,
\begin{align}\label{prove}
\int_{0}^{1}|G(t,x,y)-G^{n,\tau}_1(t,x,y)|^2\,dy\leq C\tau^{\alpha-\frac{1}{2}}\big(\big[\frac{t}{\tau}\big]\tau\big)^{-\alpha}+Cn^{1-2\alpha}t^{-\alpha}
\end{align}
with some $C:=C(\alpha)>0$ for all $x\in[0,1],t\ge \tau$.
Similar to \eqref{split_I1}, the estimate of the left-hand side of \eqref{prove} can be divided into eight subterms. The difference is to replace  $G^{n,\tau}_2$ by $G^{n,\tau}_1$ and to remove  the integral with $t$, and we still  denote these subterms    by $I^c_j,j=1,2,3,4,$ and $I^s_j,j=1,2,3,4.$ Below we only show the estimates of subterms 
 $I^c_j,j=1,2,3,4.$

Using  the inequality $e^{-z}\leq C(\alpha)z^{-\alpha}$ for $z>0,\alpha >0$, we have that for $\alpha>\frac12,$
\begin{align*}
I^c_1\leq C\sum_{r=[\frac{n}{2}]+1}^{\infty}e^{-8\pi^2 r^2t}\leq C\sum_{r=[\frac{n}{2}]+1}^{\infty}r^{-2\alpha}t^{-\alpha}\leq C\int_{[\frac{n}{2}]}^{\infty}x^{-2\alpha}\,dx\times t^{-\alpha}\leq C n^{1-2\alpha}t^{-\alpha},
\end{align*}
and that for $\alpha<\frac32,$
\begin{align*}
I^c_2+I^c_3&\leq C\sum_{r=1}^{[\frac{n}{2}]}r^2n^{-2}e^{-8\pi^2 r^2t}\leq C\sum_{r=1}^{[\frac{n}{2}]}r^{2-2\alpha}t^{-\alpha}n^{-2}
\leq Cn^{1-2\alpha}t^{-\alpha}.
\end{align*}
Similar to \eqref{split_I2}, here we still divide the estimate of the term $I^c_4$ into estimates of terms $\mathcal J_i,i=0,\ldots,6.$  
The term $\mathcal J_0$ is estimated  as 
\begin{align*}
\mathcal J_0\leq C\sum_{r=1}^{[\frac{n}{2}]}e^{-Cr^2t}\times t^2r^8n^{-4}\leq C\sum_{r=1}^{[\frac{n}{2}]}r^{8-2\gamma}t^{2-\gamma}n^{-4}\leq Cn^{5-2\gamma}t^{2-\gamma}=:Cn^{1-2\alpha}t^{-\alpha}
\end{align*}
for $\alpha<\frac 52$. 
 When $t\ge \tau$, we have \begin{align*}
\mathcal J_1=&\sum_{j\in A_1}e^{2\lambda^n_j t}\times \Big|1-\exp\Big\{\lambda^n_j \tau\Big(\frac{1}{1+(1-\theta)\tau \lambda^n_j}-1\Big)\frac{t}{\tau}\Big\}\Big|^2\\
\leq& \sum_{j\in A_1}Ce^{2\lambda^n_j t}\times j^8\tau^4(\frac{t}{\tau})^2
\leq C\sum_{1\leq j\leq \frac{1}{4}\sqrt{\frac{1}{(2-\theta)\tau}}}(\frac{t}{\tau})^{2-\gamma}(j^2\tau)^{4-\gamma}
\leq C\tau^{\alpha-\frac{1}{2}}t^{-\alpha},
\end{align*}
where we let $\alpha=\gamma-2$ in the last step, 
and for $\alpha<2$,
\begin{align*}
\mathcal J_2=&\sum_{j\in A_1}\exp\Big\{-2R_{3,j}\big[\frac{t}{\tau}\big]\Big\}|R_{3,j}|^2
\leq C\sum_{1\leq j\leq \frac{1}{4}\sqrt{\frac{1}{(2-\theta)\tau}}}(j^2t)^{-\alpha}j^4\tau^2
\leq C\tau^{\alpha-\frac{1}{2}}t^{-\alpha}.
\end{align*}
The term $\mathcal J_3=0$ in this setting, and the term $\mathcal J_4$ is estimated as \begin{align*}\mathcal J_4&=\sum_{j\in A_1}\int_{0}^{\infty}\exp\Big\{-2[\frac{t}{\tau}]\ln (1+R_{3,j})\Big\}\Big|1-\exp\Big\{[\frac{t}{\tau}](-R_{3,j}+\ln (1+R_{3,j}))\Big\}\Big|^2dt\\
&\leq C\tau^{\alpha-\frac12}t^{-\alpha}.
\end{align*}    
The term $\mathcal J_5$ is estimated as $\mathcal J_5\leq \sum_{j\in A_2}j^{-2\alpha}t^{-\alpha}\leq \int_{\frac{1}{2\pi}\sqrt{\frac{1}{2-\theta}}}^{[\frac n2]\sqrt {\tau}}y^{-2\alpha}dy\tau^{\alpha-\frac12}t^{-\alpha}\leq C\tau^{\alpha-\frac12}\tau^{-\alpha}$ for $\alpha>\frac12.$ 
The remaining term $\mathcal J_6$ can be estimated as follows. 
For $\theta\in[0,\frac{1}{2}]$, by the  boundedness condition on   $n^2\tau$ in Assumption \ref{Assumption3}, we have that for $\alpha>0$, 
\begin{align*}
\sum_{j\in A_2}(R_{1,j}R_{2,j})^{2[\frac{t}{\tau}]}
\leq \frac{1}{\sqrt{\tau}}\int_{\frac{1}{2\pi}\sqrt{\frac{1}{2-\theta}}}^{[\frac{n}{2}]\sqrt{\tau}}(1-\epsilon)^{2[\frac{t}{\tau}]}\,dy\leq \frac{C}{\sqrt{\tau}}e^{-2[\frac{t}{\tau}]\ln(1-\epsilon)^{-1}}\leq \frac{C}{\sqrt{\tau}}\big[\frac{t}{\tau}\big]^{-\alpha}.  
\end{align*}
For $\theta=1,$ we obtain that for $\alpha>0,\;t\ge \tau$,
\begin{align*}
&\sum_{\frac{1}{2\pi}\sqrt{\frac{1}{\tau}}<j\leq [\frac{n}{2}]}(1-\tau\lambda^n_j)^{-2[\frac{t}{\tau}]}
\leq \frac{1}{\sqrt{\tau}}\int_{\frac{1}{2\pi}}^{\infty}(1+16y^2)^{-2[\frac{t}{\tau}]}\,dy\\
&\leq \frac{1}{\sqrt{\tau}}(1+\frac{4}{\pi^2})^{-[\frac{t}{\tau}]} \int_{\frac{1}{2\pi}}^{\infty}(1+16y^2)^{-[\frac{t}{\tau}]}\,dy
\leq C\frac{1}{\sqrt{\tau}}\big[\frac{t}{\tau}\big]^{-\alpha}.
\end{align*} 

\textit{Case $2:$  $\theta\in(\frac{1}{2},1).$}
Define $G^{(n)}(t,x,y)=1+2\sum_{j=1}^{[\frac n2]}e^{-4\pi^2j^2t}\cos(2\pi j(x-y)).$ Similar to the estimate of $I^c_1$, we derive  \begin{align*}
\Big|\int_0^1(G(t,x,y)-G^{(n)}(t,x,y))u_0(\kappa_n(y))dy\Big|^2\leq C
\sum_{j=[\frac n2]+1}^{\infty}e^{-8\pi^2j^2t}\leq Cn^{1-2\alpha}t^{-\alpha}.
\end{align*} 
Then we proceed to estimate error between $G^{(n)}$ and $G^{n,\tau}_1.$  
We have the following decomposition of the error  \begin{align*}
&\int_{0}^{1}\big(G^{(n)}(t,x,y)-G^{n,\tau}_1(t,x,y)\big)u_0(\kappa_n(y))\,dy\\
=&\int_0^1\big(G^{(n)}(t,x,y)-G^{(n)}(t,\kappa_n(x),\kappa_n(y))\big)u_0(y)\,dy\\
&+
\int_{0}^{1}\big(G^{(n)}(t,\kappa_n(x),\kappa_n(y))-G^{n,\tau}_1(t,x,y)\big)u_0(y)\,dy\\
&+\int_{0}^{1}\big(G^{(n)}(t,x,y)-G^{n,\tau}_1(t,x,y)\big)\big(u_0(\kappa_n(y))-u_0(y)\big)\,dy
=:Q_0+Q_1+Q_2.
\end{align*} 
For the term $Q_0,$ we further split as $Q_0=Q_{0,1}+Q_{0,2},$ where 
\begin{align*}
&Q_{0,1}:=\int_0^1\big(G^{(n)}(t,x,y)-G^{(n)}(t,\kappa_n(x),y)\big)u_0(y)\,dy,\\
&Q_{0,2}:=\int_0^1\big(G^{(n)}(t,\kappa_n(x),y)-G^{(n)}(t,\kappa_n(x),\kappa_n(y))\big)u_0(y)\,dy. 
\end{align*}
Similar to the estimate of term $I^c_2+I^c_3,$ by the mean value theorem,  the term $Q_{0,1}$ can be estimated as $|Q_{0,1}|^2\leq C\sum_{i=1}^{[\frac n2]}\frac{j^2}{n^2}e^{-8\pi^2j^2t}\leq Cn^{1-2\alpha}t^{-\alpha}.$  Using  the mean value theorem again, the term $Q_{0,2}$ can be estimated as 
\begin{align*}
|Q_{0,2}|&\leq 
\sum_{j=1}^{[\frac n2]}e^{-4\pi^2j^2t}\int_{0}^{1}|\bar e_{j}(y)-\bar e_j(\kappa_n(y))||u_0(y)|dy\leq C\sum_{j=1}^{[\frac n2]}\frac jn e^{-4\pi^2j^2t}\\
&\leq C\sum_{j=1}^{[\frac n2]}\frac jn(j^2t)^{-\alpha}\leq Ct^{-\alpha}n^{-1}\int_0^{n}x^{1-2\alpha}dx\leq Cn^{1-2\alpha}t^{-\alpha},\;\alpha<1.
\end{align*} 
For the term $Q_1,$ noting that 
$e_{[\frac n2]+j}(\kappa_n(x))=\bar e_{[\frac n2]-j+1}(\kappa_n(x))$ and  $\lambda^n_{-j}=\lambda^n_j$ for $j=1,\ldots,[\frac  n2],$ we have 
\begin{align*}
&\quad G^{(n)}(t,\kappa_n(x),\kappa_n(y))-G^{n,\tau}_1(t,x,y)\\
&=\sum_{j=-[\frac n2]}^{[\frac n2]}(e^{-4\pi^2 j^2t}-(R_{1,j}R_{2,j})^{[\frac{t}{\tau}]})e_j(\kappa_n(x))\bar e_j(\kappa_n(y))\\
&=\sum_{j=0}^{[\frac{n}{2}]}(e^{-4\pi^2 j^2t}-(R_{1,j}R_{2,j})^{[\frac{t}{\tau}]})e_j(\kappa_n(x))\bar e_{j}(\kappa_n(y))\\
&\quad +\sum_{j=[\frac n2]+1}^{n-1}(e^{-4\pi^2 (n-j)^2t}-(R_{1,n-j}R_{2,n-j})^{[\frac{t}{\tau}]})e_j(\kappa_n(x))\bar e_{j}(\kappa_n(y)).
\end{align*}
Hence, when $u_0\in H^1$, it follows from \eqref{sob_rea} and the fact $\lambda^n_{n-j}=\lambda^n_j$ for $j=1,\ldots,[\frac  n2]$ that  
\begin{align*}
&|Q_{1}|^2\leq \|G^{(n)}(t,\kappa_n(x),\kappa_n(\cdot))-G^{n,\tau}_1(t,x,\cdot )\|^2_{H^{-1}}\|u_0\|^2_{H^1}\\
&\leq C\sum_{j=1}^{[\frac n2]}(1-\lambda^n_j)^{-1}|e^{-4\pi^2j^2t}-(R_{1,j}R_{2,j})^{[\frac{t}{\tau}]}|^2\\
&\leq C\sum_{j\in  A_1}|e^{-4\pi^2j^2 t}-(R_{1,j}R_{2,j})^{[\frac{t}{\tau}]}|^2+C\sum_{j\in  A_2}|e^{-4\pi^2j^2 t}-(R_{1,j}R_{2,j})^{[\frac{t}{\tau}]}|^2\frac{1}{j^2}\\&
=:Q_{1,1}+Q_{1,2}.
\end{align*}
The estimate of the term $Q_{1,1}$ is similar to that  of $\mathcal J_0+\cdots+\mathcal J_4$ in \textit{Case 1} and  thus is omitted. For the term $Q_{1,2},$
\begin{align*}
Q_{1,2} 
&\leq\sum_{\frac{1}{2\pi}\sqrt{\frac{1}{(2-\theta)\tau}}<j\leq [\frac n2]}Ce^{-8\pi^2j^2t}\times \frac{1}{j^2}+\sum_{\frac{1}{2\pi}\sqrt{\frac{1}{(2-\theta)\tau}}<j\leq [\frac n2]}C(1-\epsilon)^{2[\frac{t}{\tau}]}\times \frac{1}{j^2}\\
&\leq C\big(e^{-\frac{4}{\pi^2(2-\theta)}\times [\frac{t}{\tau}]}+(1-\epsilon)^{[\frac{t}{\tau}]}\big)\int_{\frac{1}{2\pi}\sqrt{\frac{1}{(2-\theta)\tau}}}^{[\frac n2]}\frac{1}{x^2}\,dx
\leq C\big[\frac{t}{\tau}\big]^{-\alpha}\sqrt{\tau},\;\alpha>0.
\end{align*} 

For the term $Q_{2},$ we split it as 
\begin{align*}
Q_{2}&=\int_0^1(G^{(n)}(t,\kappa_n(x),\kappa_n(y))-G^{n,\tau}_1(t,x,y))(u_0(\kappa_n(y))-u_0(y))dy\\
&+\int_0^1(G^{(n)}(t,x,y)-G^{(n)}(t,\kappa_n(x),\kappa_n(y)))(u_0(\kappa_n(y))-u_0(y))dy=:Q_{2,1}+Q_{2,2}.
\end{align*}
Since $u_0\in H^1\hookrightarrow \mathcal C^{0,\frac12},$ we derive  
\begin{align*}
&|Q_{2,1}|^2
\leq \frac{C}{n}\int_0^1\big|G^{(n)}(t,\kappa_n(x),\kappa_n(y))-G^{n,\tau}_1(t,x,y)\big|^2\,dy\\
&\leq \sum_{j\in  A_1}\frac{C}{n}|e^{-4\pi^2j^2t}-(R_{1,j}R_{2,j})^{[\frac{t}{\tau}]}|^2
+\sum_{j\in  A_2}\frac{C}{n}|e^{-4\pi^2j^2 t}-(R_{1,j}R_{2,j})^{[\frac{t}{\tau}]}|^2\\
&=:Q^{(1)}_{2,1}+Q^{(2)}_{2,1}. 
\end{align*} 
The term $Q^{(1)}_{2,1}$ is estimated similarly as before and we obtain $Q^{(1)}_{2,1} \leq C\tau^{\alpha-\frac{1}{2}}([\frac{t}{\tau}]\tau)^{-\alpha}$ for $\frac{1}{2}<\alpha<2$. For $j\in  A_2,$ we have $j^2\tau\ge C$ and thus  
\begin{align*}
Q^{(2)}_{2,1}\leq \frac{C(n-1)}{n}\big(e^{-C[\frac{t}{\tau}]}+(1-\epsilon)^{2[\frac{t}{\tau}]}\big)
\leq C(\big[\frac{t}{\tau}\big])^{-\alpha} \text{ for } \alpha>0. 
\end{align*} The  term $Q_{2,2}$ is estimated similarly to term $Q_{0,2}$, and we obtain that 
$
Q_{2,2}
\leq C\sum_{j=1}^{[\frac n2]}e^{-4\pi^2j^2t}\frac{j}{n}
\leq Ct^{-\alpha}n^{1-2\alpha}$
for $\alpha<1.$

Combining \textit{Cases 1-2}, we finish the proof.  
\end{proof}

\bibliographystyle{plain}
\bibliography{references.bib}

 \end{document}